\documentclass[11pt,letterpaper]{amsart}
\usepackage{amssymb, amscd, amsfonts, euler, epic, xy}
\xyoption{all}
\usepackage{charter}

\def\CC{{\mathbb C}}
\def\DD{{\mathbb D}}

\def\PP{{\mathbb P}}
\def\QQ{{\mathbb Q}}
\def\RR{{\mathbb R}}

\def\ZZ{{\mathbb Z}}

\def\G{{\Gamma}}
\def\g{{\gamma}}

\def\an{{\rm an}}
\def\bb{{\rm bb}}

\def\reg{{\rm reg}}

\def\sing{{\rm sg}}

\def\bb{{bb}}

\def\bs{\backslash}

\def\pt{{\bullet}}
\def\eps{\epsilon}
\def\Mcalint{\dot{\Mcal}}
\def\Xint{\dot{X}}
\def\Dint{\dot{\DD}}
\def\Tint{\dot{T}}
\def\Gint{\dot{G}}

\def\Acal{{\mathcal A}}
\def\Bcal{{\mathcal B}}

\def\Hcal{{\mathcal H}}

\def\Mcal{{\mathcal M}}

\def\Ocal{{\mathcal O}}

\def\Scal{{\mathcal S}}

\def\Ycal{{\mathcal Y}}

\def\la{\langle}
\def\ra{\rangle}

\newcommand\aut{\operatorname{Aut}}

\newcommand\orth{\operatorname{O}}
\newcommand\proj{\operatorname{Proj}}

\newcommand\rk{\operatorname{rk}}
\newcommand\sym{\operatorname{Sym}}
\newcommand\Star{\operatorname{Star}}

\newcommand\GL{\operatorname{GL}}

\newcommand\PGL{\operatorname{PGL}}

\newcommand\SL{\operatorname{SL}}

\newcommand\Orth{\operatorname{O}}

\newcommand\PSL{\operatorname{PSL}}


\newtheorem{theorem}{Theorem}[section]
\newtheorem{lemma}[theorem]{Lemma}
\newtheorem{proposition}[theorem]{Proposition}

\newtheorem{corollary}[theorem]{Corollary}

\theoremstyle{definition}

\theoremstyle{remark}
\newtheorem{remark}[theorem]{Remark}

\newtheorem{properties}[theorem]{Properties}


\begin{document}
\title{The period map for cubic fourfolds}
\author{Eduard Looijenga}
\email{looijeng@math.uu.nl}
\address{Mathematisch Instituut\\
Universiteit Utrecht\\ P.O.~Box 80.010, NL-3508 TA Utrecht\\
Nederland}
\keywords{cubic fourfold, period map}
\subjclass[2000]{Primary: 32G20,14J35; Secondary: 32N15} 

\begin{abstract} 
The period map for cubic fourfolds takes values in a  locally symmetric variety of orthogonal type of dimension 20. We determine the image of this period map (thus confirming a conjecture of Hassett) and give at the same time
a new proof of the theorem of Voisin that asserts that this period map  is an open embedding.  An algebraic version of our main result is an identification of the algebra of $\SL (6,\CC)$-invariant polynomials on the representation space $\sym^3(\CC^6)^*$ with a certain algebra of meromorphic automorphic forms on a symmetric domain of orthogonal type of dimension  $20$. We also describe the stratification of the
moduli space of semistable cubic fourfolds in terms of a Dynkin-Vinberg diagram.
\end{abstract}
\maketitle 
\section*{Introduction}
The primitive cohomology of nonsingular cubic fourfold $Y\subset \PP^5$ is located in the middle
dimension (four) and has as nonzero Hodge numbers $h^{3,1}=h^{1,3}=1$ and
$h_o^{2,2}=20$. If we make a Tate twist (which subtracts $(1,1)$ from the bidegrees),
then this  looks very much like the primitive cohomology  of a polarized K3 surface, the difference only being that  the $(1,1)$ summand is of dimension $20$ instead of $19$. This observation has in a sense been explained by Arnaud Beauville and Ron Donagi \cite{bd}: they showed that that the Fano variety of lines on $Y$ is a deformation of the  symmetric square with resolved diagonal of a polarized  K3 surface (this Fano variety is an example of a complex symplectic fourfold). It was the point of departure for Claire Voisin \cite{voisin} for her proof of the injectivity of the period map for  cubic fourfolds (which amounts to the assertion  that the polarized Hodge structure on the primitive cohomology of a nonsingular cubic fourfold determines the fourfold up to projective transformation). 

The question that remained was the image of this period map. If we make the passage from the cubic fourfold to its Fano variety, then a theorem of Huybrechts 
\cite{huybrechts}  asserts that we have essentially surjectivity: every Hodge structure of this type can be realized
by a deformation of a Fano variety of a cubic fourfold. However, it is not at all clear that any global deformation of a  Fano variety of a cubic fourfold comes from a 
deformation of the fourfold. Indeed, it was suspected by Brendan Hassett that this is not the case and in a letter to the author (dated March 22, 2002) he conjectured that the image of the
period map for cubic fourfolds (with innocent singularities allowed) would miss just
(what we call) an arithmetic arrangement. The missing Hodge structures
would be `swallowed' by the secant variety of the Veronese surface in $\PP^5$ in the sense that they only appear as the limiting Hodge structures  of all possible smoothings of this variety.

The main goal of this paper is to prove Hassett's conjecture. But our proof yields more, such as a 
new proof of Voisin's injectivity theorem.  We also find a Vinberg-Dynkin diagram of an arithmetic 
reflection group of hyperbolic type of rank 20 that gives an insightful picture of the boundary strata 
and their incidence relations.  (It is the analogue of a similar diagram of a rank 19 arithmetic 
reflection group that we obtained long ago for 
K3 surfaces of degree 2 and that is described in  \cite{scattone}, p.\ 82ff.) 

The proof uses in a fundamental way the techniques and results that we developed in 
\cite{looijenga} and  (jointly  with Swierstra) in \cite{ls}, with applications as the
present one in mind. These  pertain to compactifications of varieties of the form `locally symmetric  variety minus a locally symmetric hypersurface' and associated algebra's of meromorphic automorphic forms.  They are powerful enough to enable us to identify certain GIT-compatifications without detailed knowledge of that
compactification and their geometry. We initially used as our GIT-input the
(as yet unpublished)  work by Mutsumi Yokoyama \cite{yokoyama}, but recently  a more detailed  classification, due to Radu Laza \cite{laza}, has become available, that allowed us to shorten some of our  arguments. He has recently used  used his GIT analysis to
give an alternate proof of Hassett's conjecture along the lines of Shah's approach to K3 
surfaces of degree two \cite{laza2}.
\\

Let us now briefly comment on the contents  of the individual sections.
Section 1 is mostly a study of a lattice abstractly isomorphic to the primitive cohomology of a cubic fourfold.  We find  an arithmetic reflection group in a hyperbolic lattice of rank 20 that, among other things,  yields a classification of primitive isotropic sublattices. 
  
We use these results to describe in Section 2 a certain locally symmetric variety
of orthogonal type of dimension 20,
a locally symmetric hypersurface in this variety, and a compactification of its complement (which we later identify with the moduli space of semistable cubic fourfolds).

In Section 3 we define the period map and state our principal result.

Section 4 reviews our (rather elementary) theory of boundary pairs in a manner
that is adapted to the present situation.

Section 5 consists of  computing the degree four homology of the smooth part
of the two most singular semistable cubic fourfolds: the secant variety of the Veronese
surface and the one defined by $Z_0Z_1Z_2=Z_3Z_4Z_5$. 

In Section 6 we prove our principal result. The proof is relatively short and could have been shorter still had we not wished to include an alternative proof of Voisin's 
injectivity theorem. For the latter purpose we need  to study in some detail the  automorphism group and the deformation theory of  a cubic fourfold  of the form
$Z_0Z_1Z_2=\Phi (Z_3,Z_4,Z_5)$, where $\Phi$ defines a nonsingular plane cubic
(this fourfold has three singular points, each of type $\tilde E_6$). This is done in  
Section 7. 
\\

Although this paper uses much of the techniques developed in our earlier papers, we tried to make not all of these a prerequisite. Some familiarity with 
\cite{looijenga} remains indispensable however.

\section{The primitive cohomology lattice of cubic fourfolds}
Let $\Lambda$ be an odd unimodular  lattice of signature $(21,2)$ and 
$\eta\in\Lambda$ such that $\eta\cdot\eta=3$ and the orthogonal complement 
$\Lambda_o$ of $\eta$ is even. We denote by $\G$ resp.\   $\hat\G$ the stabilizer of 
$\eta$ resp.\  of $\ZZ\eta$  (or equivalently, $\Lambda_o$) in the 
orthogonal group of $\Lambda$. Since minus the identity is in $\hat\G-\G$, we have
$\hat\G=\{ \pm1\}\times\G$. If an element of $\hat\G$ acts trivially on $\Lambda_o$, then
it will leave $\eta$ fixed (for we have a natural identification of $\ZZ\eta/(3\eta)$ with
$\Lambda_o^*/\Lambda_o$), and so $\hat\G$ acts faithfully on $\Lambda_o$.
We may characterize $\hat\G$  as the full orthogonal group of $\Lambda_o$ and
$\G$ as the subgroup that acts trivially on $\Lambda_o^*/\Lambda_o$.
For reasons that become clear shortly, we will call a vector $v\in\Lambda_o$ 
a \emph{long root} if $v\cdot v=2$; such a vector has the property that the orthogonal reflection in it, $s_v: x\mapsto x-(x\cdot v)v$, is in $\G$.

We can identify $\Lambda$ with $2E_8\perp 2U\perp 3I$ (here $I$ denotes the odd unimodular rank one lattice: it has  a generator $\eps$ with $\eps\cdot\eps=1$) in such a manner that  $\eta=\eps_1+\eps_2+\eps_3$. So then  $\Lambda_o=2E_8\perp 2U\perp A_2$, where
$A_2$ is spanned by $\beta_1:=\eps_1-\eps_2$ and $\beta_2:=\eps_2-\eps_3$.
Notice that the orthogonal complement of $\beta_2$ in $A_2$ is spanned  by 
$h_1:=-2\eps_1+\eps_2+\eps_3=\eta-3\eps_1$, a vector with the property that  (i) $h_1\cdot h_1=6$ and (ii) $\eta-h_1\in 3\Lambda$. Indeed, the primitive hull of the span of 
$h_1$ and $\eta$ is the lattice spanned by $\eps_1$ and $\eps_2+\eps_3$, hence of
type $A_1\perp I$ and the orthogonal complement of this lattice is 
$2E_8\perp 2U\perp A_1$. 

\begin{lemma} For a vector $h\in\Lambda_o$ with $h\cdot h=6$  we have
$h\in \G h_1$ resp.\ $h\in\hat\G h_1$  if and only if $\eta-h$ is divisible by $3$ resp.\ 
$\frac{1}{3}h$ generates $\Lambda_o^*/\Lambda_o$; in these cases we say that $h$ is \emph{special} resp.\  that $\frac{1}{3}h$ is a \emph{short root}. The orthogonal reflection in a short root preserves $\Lambda_o$ and hence acts as an element of $\hat\G$.
\end{lemma}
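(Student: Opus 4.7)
The plan is to reduce both equivalences to Eichler's transitivity theorem for $O(L)$ applied to an even lattice $L$ that contains two orthogonal copies of $U$. Since $\Lambda_o = 2E_8\perp 2U\perp A_2$ contains $2U$, Eichler tells us that the $\hat\G = O(\Lambda_o)$-orbit of a primitive vector of $\Lambda_o$ is determined by its norm together with its class modulo $\Lambda_o$ after dividing by its divisibility $d(h) := \gcd\{h\cdot v : v\in\Lambda_o\}$. A norm-$6$ vector is automatically primitive, and since $\Lambda_o^*/\Lambda_o\cong\ZZ/3$ is $3$-torsion, a short argument (if $d(h)$ is even then $h/2\in\Lambda_o^*$, so $3h/2\in\Lambda_o$, hence $3h\in 2\Lambda_o$, so by parity $h\in 2\Lambda_o$, contradicting $h\cdot h = 6\notin 4\ZZ$) forces $d(h)\in\{1,3\}$. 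Since $h\notin 3\Lambda_o$ (else $h\cdot h\in 9\ZZ$), the case $d(h)=3$ is precisely the condition that $\frac{1}{3}h$ generates $\Lambda_o^*/\Lambda_o$.

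Next I would verify that the two conditions are invariant under $\G$ and $\hat\G$ respectively (for $\G$ this follows from $\eta - \gamma h = \gamma(\eta-h)$) and then show that $\eta-h\in 3\Lambda$ not only implies that $\frac{1}{3}h$ generates the discriminant but in addition pins down the class of $\frac{1}{3}h$ to be the same as that of $\frac{1}{3}h_1$ in $\Lambda_o^*/\Lambda_o$. Indeed, if $\eta-h\in 3\Lambda$, then for each $v\in\Lambda_o$ we have $v\cdot h = -v\cdot(\eta-h) \in 3\ZZ$, so $d(h) = 3$; combined with $\eta - h_1 = 3\eps_1$ this gives $h - h_1\in 3\Lambda\cap\Lambda_o$. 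A brief check shows $3\Lambda\cap\Lambda_o = 3\Lambda_o$ --- any $w\in\Lambda$ with $3w\in\Lambda_o$ satisfies $(3w)\cdot\eta = 0$, so $w\cdot\eta = 0$ and thus $w\in\Lambda_o$ --- and therefore $h\equiv h_1\pmod{3\Lambda_o}$, so $\frac{1}{3}h\equiv\frac{1}{3}h_1\pmod{\Lambda_o}$.

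The lemma now follows from Eichler. Any two norm-$6$ vectors $h,h'$ of divisibility $3$ have $\frac{1}{3}h$ and $\frac{1}{3}h'$ both generating the cyclic group $\Lambda_o^*/\Lambda_o$; possibly after applying $-1\in\hat\G$, their discriminant classes agree, and Eichler produces a $\gamma\in\hat\G$ with $\gamma h = h_1$, which gives the $\hat\G$-statement. If moreover $\eta-h\in 3\Lambda$, then by the preceding paragraph the classes of $\frac{1}{3}h$ and $\frac{1}{3}h_1$ in $\Lambda_o^*/\Lambda_o$ are already equal, so the $\gamma$ constructed by Eichler fixes a generator of the cyclic discriminant and hence acts trivially on all of it, i.e.\ $\gamma\in\G$. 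This proves the $\G$-statement. Finally, for $h$ of divisibility $3$ the orthogonal reflection in $\frac{1}{3}h$ is $x\mapsto x - \tfrac{x\cdot h}{3}\,h$, which has integer coefficients on $\Lambda_o$ exactly because $d(h)=3$, so it lies in $\hat\G$. I expect the delicate step to be the identity $3\Lambda\cap\Lambda_o = 3\Lambda_o$, which is what transfers the condition $\eta-h\in 3\Lambda$ (a statement inside $\Lambda$) into a statement about the discriminant of $\Lambda_o$.
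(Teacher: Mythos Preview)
Your proof is correct and takes a genuinely different route from the paper's. The paper argues inside the ambient odd unimodular lattice $\Lambda$: it writes $\eta-h=3\eps$, observes that $\eps$ and $\eta-\eps$ are perpendicular vectors of norm $1$ and $2$, and then uses transitivity results in $\Lambda$ (and then in $\eps_1^\perp$) to move the pair $(\eps,\eta-\eps)$ to $(\eps_1,\eps_2+\eps_3)$. You instead stay entirely inside the even lattice $\Lambda_o$ and invoke Eichler's criterion, letting the discriminant group $\Lambda_o^*/\Lambda_o\cong\ZZ/3$ do the bookkeeping. The bridge between the two settings is your observation $3\Lambda\cap\Lambda_o=3\Lambda_o$, which converts the $\Lambda$-condition $\eta-h\in 3\Lambda$ into the $\Lambda_o$-statement $\tfrac13 h\equiv\tfrac13 h_1\pmod{\Lambda_o}$. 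Your approach is cleaner and more systematic (and avoids the mild subtlety of distinguishing characteristic from non-characteristic norm-$2$ vectors in the odd lattice $\eps_1^\perp$); the paper's approach is more explicit and constructive. One small streamlining: once the discriminant classes of $h$ and $h_1$ agree, the standard form of Eichler already produces $\gamma$ in $\tilde O(\Lambda_o)=\Gamma$, so the sentence about $\gamma$ fixing a generator is not needed.
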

\begin{proof}
If $\eta-h$ is divisible by $3$: $\eta-h=3\eps$, then, as we have seen in the above argument, $\eta-\eps$ and $\eps$ are perpendicular vectors of square norm $2$ and $1$ respectively.
Since $\Lambda$ contains two summands of type $U$,  there is (according to 
well-known result in lattice theory) a  
$g\in \Orth (\Lambda)$ such that $g(\eps)=\eps_1$. Then $g(\eta-h)\in 2E_8\perp 2U\perp 2I$ and for the same reason as above (the occurrence of $2U$), there is an orthogonal transformation $g'$ of this lattice (that we think of as an orthogonal transformation fixing $\eps_1$) that sends $g'(\eta-h)$ to $\eta-h_1$. So $g'g$ is an element of $\G$ that sends $h$ to $h_1$. This implies all the assertions of the lemma, except the last. But for that we observe that
$s_{h}: x\in\Lambda_o\mapsto x-\frac{1}{3}(x\cdot h)h$ is evidently orthogonal and preserves $\Lambda_o$ and hence lies in $\hat\G$.
\end{proof}

We denote the set of special vectors by $\Hcal$. It is clear that for a short root $r$, either $3r$ or $-3r$ is special and that $r\cdot r=\frac{2}{3}$.

The vectors  $h_i:=\eta-3\eps_i$ ($i=1,2,3$) are all special, have zero sum
and lie in the $A_2$-summand of $\Lambda_o$. Since the short
and the long roots in that summand make up a root system of type $G_2$, we call it a \emph{$G_2$-summand}.

\begin{lemma}\label{lemma:specialpos}
A set of special vectors  spans a positive definite sublattice if and only if it is contained in 
a $\G$-translate of $\{h_1,h_2,h_3\}$.
\end{lemma}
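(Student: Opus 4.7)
The plan for the nontrivial (``only if'') direction is to establish three facts about a set $S$ of special vectors that spans a positive definite sublattice: (i) any two distinct elements of $S$ pair to $-3$; (ii) $|S|\le 3$; and (iii) $S$ lies inside a $\G$-translate of $\{h_1,h_2,h_3\}$.

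For (i) and (ii) I would argue arithmetically. Specialness means $h\equiv\eta\pmod{3\Lambda}$, so for distinct $h,h'\in S$ one has $(h-h')/3\in\Lambda$ and hence $\tfrac{1}{9}(12-2h\cdot h')\in\ZZ$, which forces $h\cdot h'\equiv -3\pmod 9$. Positive-definiteness of the two-dimensional span of $h,h'$ forces $|h\cdot h'|<6$, leaving only $h\cdot h'=-3$. For (ii), the $3\times 3$ Gram matrix with $6$'s on the diagonal and $-3$'s off the diagonal has $(1,1,1)$ in its kernel, so three mutually $(-3)$-paired special vectors $f_1,f_2,f_3$ give a vector $f_1+f_2+f_3$ orthogonal to the whole span, which in a positive definite sublattice must vanish. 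A hypothetical fourth element $f_4\in S$ would then also satisfy $f_1+f_2+f_4=0$, forcing $f_3=f_4$, a contradiction.

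For (iii) the case $|S|=1$ is the preceding lemma. For $|S|\in\{2,3\}$ I would write each special vector as $h=\eta-3\eps$ with $\eps\in\Lambda$ primitive of square norm $1$ and $\eps\cdot\eta=1$, exactly as in the proof of the preceding lemma. The condition $h\cdot h'=-3$ then translates to $\eps\cdot\eps'=0$, and a triple summing to zero translates to $\eps+\eps'+\eps''=\eta$. For $|S|=2$ I would set $\eps'':=\eta-\eps-\eps'$; a direct calculation gives $(\eps'')^2=1$, $\eps''\cdot\eta=1$, and $\eps''\perp\eps,\eps'$. So in both remaining cases one has an orthonormal triple $\eps,\eps',\eps''$ in $\Lambda$ summing to $\eta$. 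I would then apply the two-$U$-summand argument from the preceding lemma three times in succession: first to produce $g_1\in\aut(\Lambda)$ with $g_1\eps=\eps_1$; then in $\eps_1^\perp=2E_8\perp 2U\perp\ZZ\eps_2\perp\ZZ\eps_3$, which still contains two $U$-summands, to produce $g_2$ fixing $\eps_1$ with $g_2g_1\eps'=\eps_2$; and finally in $2E_8\perp 2U\perp\ZZ\eps_3$ to produce $g_3$ sending the image of $\eps''$ to $\eps_3$. The composition $g=g_3g_2g_1$ fixes $\eta=\eps_1+\eps_2+\eps_3$, so $g\in\G$, and it carries $S$ into $\{h_1,h_2,h_3\}$.

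The main obstacle will be the arithmetic rigidity in (i): the combination of the mod-$9$ congruence with the sharp positive-definiteness bound is what leaves only the single value $-3$. Once this and the cardinality bound in (ii) are in place, the normalization in (iii) is essentially automatic, because the decisive $2E_8\perp 2U$ block survives intact at each of the three inductive steps, so no new ingredient beyond the preceding lemma is needed.
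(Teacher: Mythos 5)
Your proof is correct and follows essentially the same route as the paper: force pairwise products of distinct special vectors to be $-3$, bound the set at three elements via the isotropic (hence zero) sum, rewrite everything as an orthonormal triple of norm-one vectors summing to $\eta$, and normalize using the transitivity afforded by the two $U$-summands. The only differences are cosmetic — your mod-$9$ congruence replaces the paper's direct computation with $h'=h+3v$, and your three successive Eichler steps replace the paper's single appeal to the embedding $3I\hookrightarrow\Lambda$ and the structure of its orthogonal complement.
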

\begin{proof}
Let $h,h'\in\Hcal$ be distinct and span a positive definite sublattice. 
Since $h,h'\in\Hcal$ span a positive definite lattice of rank two we must have $|h\cdot h'|<6$. Since  $h\cdot h'$ is divisible by $3$, it must therefore lie in $\{-3,0,3\}$. We know that $h'=h+3v$ for some $v\in \Lambda_o$.  From $h\cdot h'\in \{-3,0,3\}$ we get that $h\cdot v\in\{-1,-2,-3\}$. Since $h'\cdot h'=6$ we also have $2h\cdot v+3v\cdot v=0$, and so
the only possibility is that $h\cdot v=-3$ and $v\cdot v=2$. This implies $h\cdot h'=-3$.
If there is a third element $h''\in\Hcal -\{ h,h'\}$ such that the span
of $h,h',h''$ is positive definite, then we have also $h''\cdot h=h''\cdot h=-3$. This implies that $h+h'+h''$ is isotropic.
As we assumed that the lattice spanned by $h,h',h''$ is positive definite, it follows that $h+h'+h''=0$. 
So we have then a maximal subset of $\Hcal$ that spans a postive definite sublattice.
Returning to the pair   $h,h'\in\Hcal$, then $\eps':=\frac{1}{3}(\eta-h')$, $\eps'':=\frac{1}{3}(\eta-h'')$ and
$\eps''':=-\eps-\eps'+\eta$ span mutually perpendicular vectors of norm $1$. We thus get an embedding  $j:3I\to \Lambda$
that sends the sum of the generators to $\eta$.
The orthogonal complement of $j$  is even, unimodular and of signature $(18,2)$, hence isomorphic to $2E_8\perp 2U$.
Any  vector in $3I$ of selfproduct $3$ is a signed sum of basis vectors and hence equivalent to $\eta$. This implies
that $j$ may be composed with an element of $\G$ to produce the given embedding of $3I$ in $\Lambda$. This will
take $h'$ and $h''$ to $\{h_1,h_2,h_3\}$.
\end{proof}

\begin{lemma}\label{lemma:iii}
The primitive isotropic elements of $\Lambda_o$ lie in a single $\G$-orbit.
\end{lemma}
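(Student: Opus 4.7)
The plan is to apply Eichler's transitivity criterion for primitive vectors in even lattices containing two orthogonal hyperbolic planes. In our setup the criterion asserts that two primitive vectors of equal square norm in $\Lambda_o$ are in the same $\G$-orbit (recall that $\G$ has been characterized as the stable orthogonal group of $\Lambda_o$, i.e.\ the kernel of $\Orth(\Lambda_o)\to \Orth(\Lambda_o^*/\Lambda_o)$) if and only if they determine the same class of $v/d(v)\in \Lambda_o^*/\Lambda_o$, where $d(v)$ denotes the divisibility of $v$ (the positive generator of $v\cdot \Lambda_o\subset \ZZ$). Since $\Lambda_o=2E_8\perp 2U\perp A_2$ contains two copies of $U$, the criterion is applicable.

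First I would compute $d(v)$ for a primitive isotropic $v$. The class $[v/d(v)]\in \Lambda_o^*/\Lambda_o\cong A_2^*/A_2\cong \ZZ/3\ZZ$ has order exactly $d(v)$, so $d(v)\in\{1,3\}$. If $d(v)=3$, then $v/3$ would represent a nonzero element of $\Lambda_o^*/\Lambda_o$; but on such an element the discriminant quadratic form (valued in $\QQ/2\ZZ$) takes the value $\pm 2/3$, whereas $(v/3)\cdot(v/3)=0$. Hence $d(v)=1$, so the Eichler class is trivial for every primitive isotropic $v$.

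Since any two primitive isotropic vectors therefore share the same Eichler invariants (norm $0$, trivial discriminant class), the criterion produces the required element of $\G$.

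The only potential obstacle is invoking Eichler's lemma itself; a self-contained alternative is to use $d(v)=1$ to pick $w\in\Lambda_o$ with $v\cdot w=1$ and, using that $\Lambda_o$ is even, subtract a multiple of $v$ to arrange $w\cdot w=0$, giving a direct-summand hyperbolic plane $U_v=\ZZ v+\ZZ w$ whose orthogonal complement $M_v$ has signature $(19,1)$ and discriminant form identical to that of $\Lambda_o$. By Nikulin's uniqueness theorem $M_v\cong 2E_8\perp U\perp A_2$ independently of $v$, so transporting splittings attached to two primitive isotropic vectors $v,v'$ yields an isometry $\alpha$ of $\Lambda_o$ sending $v$ to $v'$. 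If $\alpha\notin \G$, one post-composes by the reflection in a short root contained in the $A_2$-summand of $M_{v'}$: by the opening lemma such a reflection lies in $\hat\G$ and hence acts as $-1$ on $\Lambda_o^*/\Lambda_o$, while fixing $v'$. The corrected isometry lies in $\G$ and still carries $v$ to $v'$.
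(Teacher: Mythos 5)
Your proof is correct and is essentially the paper's own argument: the paper disposes of this lemma in one line as ``a formal consequence of the fact that $\Lambda_o$ contains a sublattice isomorphic to $2U$,'' which is precisely the Eichler-criterion route you take (your computation that $d(v)=1$ via the discriminant form, and the Nikulin-based alternative, are just careful expansions of what the paper leaves implicit).
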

\begin{proof} This is again a formal consequence of  the fact that $\Lambda_o$ contains a sublattice isomorphic to $2U$.
\end{proof}

\subsection*{The associated hyperbolic lattice} An example of a primitive isotropic vector is the basis vector $e_2$ in the second hyperbolic summand. It is clear that $e_2^\perp/\ZZ e_2=2E_8\perp A_2\perp U$. This lattice, that we shall denote  by $\Lambda_1$, has hyperbolic signature. Let $W$ denote the group generated by reflections in all the roots (long and short)  in $\Lambda_1^*$.  If we implement Vinberg's algorithm for finding a fundamental polyhedron of $W$, we find that it terminates. We thus end up with a \emph{finite} collection $B$ of roots such that the inner product between any pair of distinct element  is $\le 0$ and that every root in $\Lambda_1^*$ is a linear combination
of elements of $B$ with all coefficients in $\ZZ_{\ge 0}$ or all in $\ZZ_{\le 0}$.
This is equivalent to the Dynkin diagram $D(B)$ having the property that the vertices of every \emph{maximal} subdiagram  of finite resp.\ of  \emph{pure} affine type (meaning that  all connected components of its are of that  type) span a sublattice of corank one in $\Lambda_1^*$. 

The diagram $D(B)$ is  conveniently described in abstract terms. Let us first do this for the full subdiagram $D(B_\ell)$ on the set of long roots $B_\ell$ in $B$. 
For this we begin telling how the elements of $B_\ell$ can be found in the 
lattice $\Lambda_1=2E_8\perp A_2\perp U$. Denote the fundamental roots 
of $E_8$ by $(\alpha_1,\dots ,\alpha_8)$ and  denote by $(\varpi_1,\dots ,\varpi_8)$
its dual basis of fundamental weights. For the corresponding elements in the second  copy of $E_8$ we use a prime. Recall that we had already introduced the  root basis 
$(\beta_1=\eps_1-\eps_2,\beta_2=\eps_2-\eps_3)$ of $A_2$. We put $\beta_0:=-\beta_1-\beta_2=\eps_3-\eps_1$.
Now let $B_\ell$ consist of the long roots
$\{\alpha_1,\dots ,\alpha_8,-\varpi_8-e\}$ (of type $\hat E_8$), 
$\{\alpha'_1,\dots ,\alpha'_8,-\varpi'_8-e\}$ (also of type $\hat E_8$), 
$e+f$, $-e+\beta_0$, $\beta_1$, $-\varpi_1-\varpi'_1-2e+2f+\beta_0$,
$-\varpi_2-\varpi'_7-3e+3f-\beta_1-2\beta_2$, $-\varpi_7-\varpi'_2-3e+3f+\beta_0$.
Notice that any two elements of $B_\ell$ have inner product $0$ or $-1$. The  corresponding Dynkin graph $D(B_\ell)$ is as in Figure \ref{fig1}:
where (say) $u=e+f$, $ua=-e-\beta_1-\beta_2$, $au=\beta_1$, $a=-\varpi_1-\varpi'_1-2e+2f+\beta_0$, $bv=-\varpi_2-\varpi'_7-3e+3f+\beta_0-\beta_2$,
$cw=-\varpi_7-\varpi'_2-3e+3f+\beta_0-\beta_2$ (this funny labeling will become clear in a moment), and the 
remaining vertices produce the two copies of $\hat E_8$.  This picture reveals a symmetry that was not apparent before (and suggested our new labeling of the vertices): we can describe the abstract graph 
$D(B_\ell)$ solely  in terms of  a  $6$-element set $\Bcal$ of which is given a partition
into $3$-element subsets $\Bcal', \Bcal''$ ($\Bcal$ is the set of branch points of $B_\ell$
partitioned by the equivalence relation of being not connected by an edge):  consider their join $\Bcal'\star \Bcal''$ (a graph whose set of vertices is $\Bcal$ and whose set of edges is the set of unordered pairs, one item in $\Bcal'$, another in $\Bcal''$). Then $D(B_\ell)$ is obtained by  putting on each edge of this join two additional vertices.  So any new vertex has as its label an element of $(\Bcal'\times \Bcal'')\cup  (\Bcal''\times \Bcal')$. This is illustrated by Figure \ref{fig1}, where we have denoted the elements of the two sets by $\{a,b,c\}$ and
$\{u,v,w\}$. So the vertices  of degree $3$ of $B_\ell$ may be denoted 
$r_a,\dots ,r_w$ and those of degree $2$ by $r_{au},\dots ,r_{wc}$. It is clear that the
automorphism group of $B_\ell$ can be identified with the group of permutations
of $\Bcal$ that preserve the decomposition (which is a semidirect product
$\ZZ/2\ltimes (\aut(\Bcal')\times\aut (\Bcal''))$).

In $B_\ell$ we recognize the following maximal  subdiagrams of pure affine type:
\begin{enumerate}
\item[$3\hat E_6$:] omit $\Bcal'$ or $\Bcal''$, 
\item[$\hat D_7\perp\hat A_{11}$:] omit two 
degree two vertices at distance $5$, 
\item[$A_{17}$:] omit the interior vertices on three disjoint lines of the join, 
\item[$\hat E_7\perp \hat D_{10}$:] omit for instance $\{au,bv,bw, cv,cw\}$, 
\item[$\hat D_{16}$:] omit for instance the strings $\{ a,au,au\}$, $\{bv,vb\}$,
$\{cw,wc\}$, 
\item[$2\hat E_8$:] omit for instance the  string  $\{a,au,ua,u\}$ and the vertices
$\{bv,cw\}$.
\end{enumerate}
Only in the first two cases the corresponding subset of $B_\ell$ spans  a sublattice of corank one. For this reason we need some short roots to produce $B$. 
These will then produce 
an extra affine summand of type $\hat A^s_1$ (consisting of short roots, this is what the superscript $s$ stands for) or $\hat G_2$
so that now the corank one property is fulfilled in all cases. The 
short roots $B_s$ in $B$ that we find are indexed the set 
of bijections from one part of $\Bcal$  onto the other (so from $\Bcal'$ onto
$\Bcal''$ or vice versa): for such a bijection $\sigma$  the corresponding short root $r_\sigma$ has the following properties:
$r_\sigma$ has inner product zero with any root of $B_\ell$ unless it is a degree two vertex of the form  $r_{b\sigma (b)}$ for which $b$ is in the domain of $\sigma$. Furthermore, the inner product between two short roots is as follows:
\[
r_\sigma\cdot r_\tau=
\begin{cases}
-\frac{2}{3} &\text{ if $\tau=\sigma^{-1}$ or $\sigma\tau^{-1}$ exists and has order $2$,}\\
-\frac{4}{3} &\text{ if $\sigma\tau$ exists and is of order $2$ or $\sigma\tau^{-1}$ exists and has order $3$,}\\
-\frac{5}{3} &\text{ if $\sigma\tau$ exists and has order $3$.}
\end{cases}
\]
In the first case, $r_\sigma-r_\tau$ is isotropic and $\{\sigma,\tau\}$ defines a $\hat A_1$-subdiagram consisting of short roots (that can be extends in $B$ in several ways to a $\hat G_2$-diagram). In the other two cases,  $r_\sigma,r_\tau$ generate a sublattice of hyperbolic signature so that $\{\sigma,\tau\}$ is of hyperbolic type.

\medskip
{\begin{picture}(50, 250)%
\linethickness{1pt}

  \put(180,235){\makebox (0,0)[b]{Fig. 1: The long roots diagram attached to $2E_8\perp A^*_2\perp U$}}\label{fig1}
   \put(70, 40){\circle*{4}}%
   \put(70, 40){\line(1, 0){35}}%
   \put(105, 40){\circle*{4}}%
   \put(105, 40){\line(1, 0){35}}%
   \put(140, 40){\circle*{4}}%
   \put(140, 40){\line(1, 0){35}}%
   \put(175, 40){\circle*{4}}%
    
    \put(70,31){\makebox(0,0)[b]{v}}%
    \put(105,31){\makebox(0,0)[b]{va}}%
    \put(141,31){\makebox (0,0)[b]{av}}
    \put(176,31){\makebox (0,0)[b]{a}}
  
   \put(45,67){\makebox (0,0)[b]{vc}}
   \put(35,91){\makebox (0,0)[b]{cv}}
   \put(20,127){\makebox (0,0)[b]{c}}
   
   \put(32,160){\makebox (0,0)[b]{cu}}
   \put(46,190){\makebox (0,0)[b]{uc}}
   \put(70,223){\makebox (0,0)[b]{u}}
  
   \put(105,223){\makebox (0,0)[b]{ub}}
   \put(141,223){\makebox (0,0)[b]{bu}}
    \put(176,223){\makebox (0,0)[b]{b}}
  
   \put(199,67){\makebox (0,0)[b]{aw}}
   \put(215,97){\makebox (0,0)[b]{wa}}
   \put(227,127){\makebox (0,0)[b]{w}}
   \put(214,158){\makebox (0,0)[b]{wb}}
   \put(199,190){\makebox (0,0)[b]{bw}}
 
   \put(70, 40){\line(-1, 2){15}}%
   \put(55, 70){\circle*{4}}%
   \put(55, 70){\line(-1, 2){15}}%
   \put(40, 100){\circle*{4}}%
   \put(40, 100){\line(-1, 2){15}}%
   \put(25, 130){\circle*{4}}%

   \put(25, 130){\line(1, 2){15}}%
   \put(40, 160){\circle*{4}}%
   \put(40, 160){\line(1, 2){15}}%
   \put(55, 190){\circle*{4}}%
   \put(55, 190){\line(1, 2){15}}%
   \put(70, 220){\circle*{4}}%
   
   \put(70, 220){\line(1, 0){35}}%
   \put(105, 220){\circle*{4}}%
   \put(105, 220){\line(1, 0){35}}%
   \put(140, 220){\circle*{4}}%
   \put(140, 220){\line(1, 0){35}}%
   \put(175, 220){\circle*{4}}%

   \put(175, 40){\line(1, 2){15}}%
   \put(190, 70){\circle*{4}}%
   \put(190, 70){\line(1, 2){15}}%
   \put(205, 100){\circle*{4}}%
   \put(205, 100){\line(1, 2){15}}%
   \put(220, 130){\circle*{4}}%

   \put(220, 130){\line(-1, 2){15}}%
   \put(205, 160){\circle*{4}}%
   \put(205, 160){\line(-1, 2){15}}%
   \put(190, 190){\circle*{4}}%
   \put(190, 190){\line(-1, 2){15}}%
   
   \qbezier(25,130),(122, 155),(220,130)
   \put(90, 141){\circle*{4}}%
   \put(90,132){\makebox (0,0)[b]{cw}}
   \put(155,141){\circle*{4}}%
   \put(155,132){\makebox (0,0)[b]{wc}}
   
   \qbezier(70,40),(130, 120),(175,220)
   \put(111, 100){\circle*{4}}%
   \put(103,100){\makebox (0,0)[b]{vb}}
   \put(145,160){\circle*{4}}%
   \put(138,160){\makebox (0,0)[b]{bv}}
   
   \qbezier(70,220),(130, 100),(175,40)
   \put(101,160){\circle*{4}}%
   \put(109,158){\makebox (0,0)[b]{ua}}
   \put(136,100){\circle*{4}}%
    \put(144,98){\makebox (0,0)[b]{au}}
    
\end{picture}}

\begin{properties}\label{properties}
It is known that 
\begin{enumerate}
\item The  open polyhedral cone 
$C\subset\Lambda_1\otimes\RR$ defined by $x\cdot b<0$ for all 
$b\in B$ is contained in a connected component $(\Lambda_1\otimes\RR)_+$ of
the set $x\in \Lambda_1\otimes\RR$ with $x\cdot x<0$. 
\item $C$ is a connected component in the set of $x\in \Lambda_1\otimes\RR$ that
lie on no reflection hyperplane.
\item $W\overline{C}$  is the convex hull of 
$\overline{(\Lambda_1\otimes\RR)_+}\cap \Lambda_1$ (this is also the union of $(\Lambda_1\otimes\RR)_+$ and the rays spanned by an isotropic vector in $\Lambda_1$ on the boundary of $(\Lambda_1\otimes\RR)_+$) and  $\overline{C}$  is a strict fundamental domain for the action of $W$ in $\overline{(\Lambda_1\otimes\RR)_+}$. Moreover, $W$ is generated by the the reflections in the elements of $B$.
\item The index two subgroup of the orthogonal group of $\Lambda_1$ that preserves $(\Lambda_1\otimes\RR)_+$ contains $W$ as a normal subgroup with quotient the symmetry group $\aut(D(B))=\ZZ/2\ltimes (\aut(\Bcal')\times\aut (\Bcal''))$.
\item Let $C\subset C_s\subset (\Lambda_1\otimes\RR)_+$ denote the connected component of the set of 
$x\in (\Lambda_1\otimes\RR)_+$ that lie on no reflection hyperplane of a short root and contains $C$.  Then the 
$W$-stabilizer of $C_s$ is the subgroup $W(B_\ell)$  of $W$ generated by the reflections in  the set of long roots in $B$, $B_\ell$. 
\end{enumerate}
\end{properties}

\begin{corollary}\label{cor:ii}
The orbits of the primitive isotropic vectors in $\Lambda_1$ under the orthogonal group
of $\Lambda_1$ are separated by the root system in their stabilizer: they are of affine type $2\hat E_8\perp \hat G_2$, $\hat D_{16}\perp \hat G_2$, $\hat A_{17}\perp \hat A_1^s$, $\hat E_7\perp \hat D_{10}\perp \hat A_1^s$, $3\hat E_6$, $\hat D_7\perp \hat A_{11}$. (Here $A_1^s$ stands for a copy of $A_1$ spanned by a short root.)

The $\G$-orbits of the primitive isotropic lattices $K\subset\Lambda_o$ of rank $2$ can be distinguished by the isomorphism type of the associated positive definite even lattice $K^\perp/K$. These, in turn, can be distinguished by the root systems without long roots that they contain and the types that thus appear are  $2E_8$, $D_{16}$, $A_{17}$,  $E_7\perp D_{10}$, $3E_6$ and $D_7\perp A_{11}$.
\end{corollary}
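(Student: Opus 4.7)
The plan is to reduce both assertions to the classification of corank-one pure affine subdiagrams of $D(B)$, which was essentially carried out in the discussion preceding Properties \ref{properties}.

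For the first assertion, by Properties \ref{properties}(3)--(4), every primitive isotropic vector $v\in\Lambda_1$ is $W$-equivalent to one whose ray lies in $\overline{C}$, and the orthogonal symmetries of $\Lambda_1$ preserving $(\Lambda_1\otimes\RR)_+$ form the semidirect product $W\rtimes\aut(D(B))$. Together with the fact that $-1\in\orth(\Lambda_1)$ interchanges the two components of the negative cone, this yields a bijection between $\orth(\Lambda_1)$-orbits of primitive isotropic vectors and $\aut(D(B))$-orbits of isotropic rays in $\overline{C}$. Such a ray is uniquely determined by the subset $B(v):=\{b\in B:v\cdot b=0\}$, which must be a pure affine subdiagram of $D(B)$ of corank one in $\Lambda_1^*$ (its radical being $\RR v$), and every such subdiagram arises this way. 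From the enumeration preceding Properties \ref{properties}: the long-root configurations $3\hat E_6$ and $\hat D_7\perp\hat A_{11}$ already have corank one, while the other four maximal pure affine subdiagrams of $D(B_\ell)$ have corank two and are completed to corank one by adjoining a short-root component $\hat G_2$ or $\hat A_1^s$ from $B_s$. The inner-product table for short roots determines which extensions are possible, and after verifying that each extension is unique up to $\aut(D(B))$, one obtains exactly the six affine types in the statement.

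For the second assertion, given a primitive rank-2 isotropic lattice $K\subset\Lambda_o$, pick a primitive isotropic $e\in K$; by Lemma \ref{lemma:iii} we may use $\G$ to assume $e=e_2$, so that $e^\perp_{\Lambda_o}/\ZZ e=\Lambda_1$. Then $\bar K:=K/\ZZ e$ is a primitive isotropic vector of $\Lambda_1$, and
\[
K^\perp_{\Lambda_o}/K\;\cong\;\bar K^\perp_{\Lambda_1}/\ZZ\bar K
\]
as positive definite even lattices. Since $\Lambda_o$ contains a hyperbolic summand complementary to $\ZZ e$, the $\G$-stabilizer of $e$ maps, modulo the unipotent radical of Eichler transvections along $e$, onto the subgroup of $\orth(\Lambda_1)$ acting trivially on $\Lambda_1^*/\Lambda_1$. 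Because the $\orth(\Lambda_1)$-orbit of a primitive isotropic vector is already captured by a discrete invariant (its affine type, by part one), this subgroup has the same orbits as $\orth(\Lambda_1)$ on primitive isotropic vectors, so $\G$-orbits of $K$ correspond bijectively to the six affine types. The root system of the positive definite even lattice $K^\perp/K$ is then obtained from the affine type by deleting one node from each affine component, and its long-root part reproduces the list $2E_8$, $D_{16}$, $A_{17}$, $E_7\perp D_{10}$, $3E_6$, $D_7\perp A_{11}$.

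The main obstacle is the combinatorial enumeration of the short-root extensions in the first part, which requires careful use of the inner-product formulas for elements of $B_s$ to rule out spurious extensions and to verify that each permissible extension is unique up to $\aut(D(B))$. The surjectivity statement in the second part is a standard lattice-theoretic fact for sublattices with two hyperbolic summands, and its proof is made easy by the fact that the orbits are already distinguished by the coarse affine-type invariant.
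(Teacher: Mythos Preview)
Your first assertion is handled exactly as in the paper: both of you invoke Properties~\ref{properties} to reduce $\orth(\Lambda_1)$-orbits of primitive isotropic vectors to $\aut(D(B))$-orbits of maximal pure affine subdiagrams of $D(B)$, and then appeal to the enumeration carried out just before Properties~\ref{properties}. For the second assertion you also follow the paper's route---normalize a primitive vector of $K$ to $e_2$ via Lemma~\ref{lemma:iii} and pass to the isotropic line $K/\ZZ e_2$ in $\Lambda_1$.

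The gap is in your descent from $\orth(\Lambda_1)$-orbits to $\G$-orbits. You correctly note that the image of the $\G$-stabilizer of $e_2$ in $\orth(\Lambda_1)$ is the subgroup $\G_1$ acting trivially on $\Lambda_1^*/\Lambda_1$, but your justification that $\G_1$ and $\orth(\Lambda_1)$ have the same orbits on primitive isotropic vectors---``because the $\orth(\Lambda_1)$-orbit is already captured by a discrete invariant''---is a non sequitur. That the orbits of a larger group are separated by some invariant does not prevent a single such orbit from splitting into several orbits of a subgroup, all sharing that invariant. The paper closes this in one line by a trick you omit: since $\hat\G=\{\pm 1\}\times\G$ and $-1$ fixes every sublattice, the $\G$-orbit of the \emph{plane} $K$ is already its $\hat\G$-orbit; one may therefore work with $\hat\G=\orth(\Lambda_o)$, whose stabilizer of $e_2$ genuinely surjects onto all of $\orth(\Lambda_1)$. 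In your language the same point reads: $\Lambda_1^*/\Lambda_1\cong\ZZ/3$ has automorphism group $\{\pm 1\}$, so $\orth(\Lambda_1)=\G_1\cup(-1)\G_1$, and since $-1$ fixes the line $\ZZ\bar K$, the $\G_1$- and $\orth(\Lambda_1)$-orbits of isotropic \emph{lines} agree. That is the missing argument; the ``coarse invariant'' reasoning you gave does not supply it.
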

\begin{proof}
We observe that the maximal subdiagrams of $D(B)$ of pure affine type are
precisely the affine completions of the root systems listed and that those
of a given type lie in a single $\aut (B)$-orbit. In view of the Properties \ref{properties} above it follows that the orbits of the orthogonal group of $\Lambda_1$ in the set of primitive isotropic vectors are in bijection of the root system types listed. 

Now let $K$ be an isotropic plane in $\Lambda_o$. Since  its  $\G$-orbit  is also
a $\hat\G$-orbit it is enough to show that some element of  $\hat\G$.
Choose a primitive vector in $K$. Then there exists an element of $\G$ that takes that element to $e_2$. We may therefore assume that $e_2\in K$. Then $K/\ZZ e_2$ defines
a primitive isotropic rank one lattice in $\Lambda_1$ and the assertion follows from the discussion above.
\end{proof}

\section{The arithmetic arrangement}

We have a quadric $\check{\DD}$ in  $\PP(\Lambda_o\otimes\CC)$ defined by $\omega\cdot\omega =0$. The open subset $\DD$ of $\check{\DD}$ defined $\omega\cdot\bar\omega <0$ has two  connected components that are interchanged by complex conjugation as well as by an element of $\G$. We put $X:=\G\bs \DD$.
The \emph{basic automorphic line bundle} $\Acal(1)$ on $\DD$ is the restriction of 
$\Ocal^\an_{\PP(\Lambda_o\otimes \CC)} (-1)$ to $\DD$. It is acted on by $\G$ and hence descends to a
line bundle over $X$, denoted $\Ocal_X(1)$, in the sense of orbifolds. (The notational switch from
$-1$ to $1$ has to with the fact that this bundle turns out to be  ample.)
A section of $\Ocal_X(k)$ is by definition a $\G$-invariant section of $\Acal (k)$. 
The Baily-Borel theory 
tells us among other things that 
\[
\oplus_{k\ge 0} H^0(X,\Ocal_X(k))= \oplus_{k\ge 0} H^0(\DD,\Acal (k))^\Gamma
\]
is a finitely generated graded algebra (of automorphic forms) whose $\proj$ defines a normal projective completion $X\subset X^\bb$  of the orbit space. Its boundary $X^\bb -X$ is of dimension at most one
and naturally stratified: we add a singleton resp.\ an irreducible  curve for every $\G$-orbit of primitive isotropic sublattices of rank $1$ resp.\ $2$ (with the incidence relations faithfully reflecting the inclusion relations). 
So in the present case we have by Lemma \ref{lemma:iii} and Corollary \ref{cor:ii} the following strata:
a singleton $X(III)$ and irreducible curves $X(R)$, with $R$ running over the root systems
$2E_8$, $D_{16}$, $A_{17}$, $E_7\perp D_{10}$, $3E_6$, $D_7\perp A_{11}$. The curves have the 
singleton $X(III)$ as common boundary. 

We also use  the set $\Hcal$ of special vectors to index the collection of hyperplanes in $\Lambda_o\otimes\CC$ or  $\PP(\Lambda_o\otimes\CC)$ that are orthogonal  to such vectors; in particular we denote by 
$\DD_h$ the hyperplane section of $\DD$ defined by $h\in\Hcal$. Thus we get a $\G$-invariant arithmetic arrangement on $\DD$ in the sense of  \cite{looijenga}. We denote the image of any $\DD_h$ in $X$ by
$X_\Hcal$. Since $X$ is an orbifold, $X_\Hcal$ is a Cartier divisor in $X$  the orbifold sense.
Its closure $\overline{X}_\Hcal$ in $X^\bb$  contains a given boundary stratum if and only if there exists a 
special vector perpendicular to a primitive isotropic sublattice representing that stratum. This closure is disjoint with the remaining strata. Since a special vector is a multiple of a short root, we can immediately tell when this is the case:

\begin{lemma}
The closure  $\overline{X}_\Hcal$ of $X_\Hcal$ contains the one dimensional strata of type $X(R)$ 
for which $R$ is a root system of rank $<18$ (so $R$ of type
$2E_8$, $D_{16}$, $A_{17}$, $E_7\perp D_{10}$) and  the punctual stratum $X(III)$, but is disjoint with
the others (the strata $X(R)$ with $R$ of type $3E_6$ and $D_7\perp A_{11}$).
\end{lemma}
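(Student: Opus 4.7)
The plan is to apply the criterion stated immediately before the lemma: $\overline{X}_\Hcal$ contains a given boundary stratum if and only if some special vector is perpendicular to a primitive isotropic sublattice representing that stratum. Since a special vector has the form $3r$ for a short root $r$, this amounts to asking for a short root in the orthogonal complement. For the point stratum $X(III)$, by Lemma \ref{lemma:iii} I may represent it by $e_2$; the special vector $h_1=-2\eps_1+\eps_2+\eps_3$ lies in the $A_2$-summand and is therefore orthogonal to $e_2$, so $X(III)\subset\overline{X}_\Hcal$.

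For a curve stratum, let $K\subset\Lambda_o$ be a primitive isotropic sublattice of rank two. Lemma \ref{lemma:iii} lets me arrange that $e_2\in K$, so that $K/\ZZ e_2$ is a primitive isotropic line in $\Lambda_1=e_2^\perp/\ZZ e_2=2E_8\perp A_2\perp U$, with orbit type recorded by one of the affine root systems listed in Corollary \ref{cor:ii}. A short root in $\Lambda_o^*$ orthogonal to $K$ automatically lies in $e_2^\perp$ and descends to a short root in $\Lambda_1^*$ orthogonal to $K/\ZZ e_2$, and conversely every such short root lifts back. By property~(5) of Properties~\ref{properties} together with the explicit description of the short-root set $B_s$ recorded after Figure~\ref{fig1}, the short roots of $\Lambda_1^*$ orthogonal to a cusp are precisely the short roots sitting in a short-root component of the affine diagram attached to that cusp; such a component must be of type $\hat G_2$ or $\hat A_1^s$.

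It then suffices to inspect the six cases of Corollary \ref{cor:ii}: the types $2\hat E_8\perp\hat G_2$, $\hat D_{16}\perp\hat G_2$, $\hat A_{17}\perp\hat A_1^s$ and $\hat E_7\perp\hat D_{10}\perp\hat A_1^s$ carry a short-root summand, and these are exactly the four types for which $R$ has rank strictly less than $18$ (the respective ranks being $16,16,17,17$); the remaining two, $3\hat E_6$ and $\hat D_7\perp\hat A_{11}$, with $\rk R=18$, do not. This gives exactly the inclusions and the disjointness asserted in the lemma.

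The main obstacle is the identification made in the second paragraph: from the existence of a short root perpendicular to the cusp $v$ one must read off the presence of a $\hat G_2$ or $\hat A_1^s$ summand in the affine diagram at $v$. Once this is extracted from property~(5) and the explicit tabulation of $B_s$, everything else is routine bookkeeping using Corollary \ref{cor:ii}.
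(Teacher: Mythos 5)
Your proposal is correct and follows exactly the route the paper intends: the lemma is stated without a separate proof precisely because it is meant to follow from the criterion in the preceding paragraph (a stratum lies in $\overline{X}_\Hcal$ iff a special vector, i.e.\ a multiple of a short root, is perpendicular to a representing isotropic sublattice) combined with the cusp classification of Corollary \ref{cor:ii}. Your reduction to the presence of a $\hat G_2$ or $\hat A_1^s$ summand in the affine diagram at the cusp, and the rank bookkeeping, is the intended argument spelled out in full.
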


Notice that a  subset of $\Hcal$ spans a positive definite sublattice if and only if its orthogonal complement meets $\DD$.

\begin{corollary}[to Lemma \ref{lemma:specialpos}]
The only proper intersections of the arrangement  $\{\DD_h\}_{h\in\Hcal}$  are of codimension $2$ and of type $G_2$. These lie in a single $\G$-conjugacy class.
\end{corollary}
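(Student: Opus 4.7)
The plan is to read this off Lemma \ref{lemma:specialpos} by translating statements about the arrangement into statements about spans of subsets of $\Hcal$. The bridging observation is the one made just before the corollary: for $S \subseteq \Hcal$, the intersection $\bigcap_{h\in S}\DD_h \subseteq \DD$ is nonempty if and only if $S$ spans a positive definite sublattice of $\Lambda_o$. Thus a proper intersection (codimension $\ge 2$) corresponds to such an $S$ whose $\ZZ$-span has rank at least $2$.

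Given this, I would invoke Lemma \ref{lemma:specialpos}: any positive definite set of special vectors sits in a $\G$-translate of $\{h_1,h_2,h_3\}$. Since $h_1+h_2+h_3=0$ (as $h_i = \eta - 3\eps_i$), this triple spans a rank-two sublattice, namely the $A_2$-summand. Consequently the $\ZZ$-span of any such $S$ has rank exactly $2$, so every proper intersection of the arrangement has codimension exactly $2$; in particular there are no higher codimension proper intersections to worry about.

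For the remaining claim, I would note that the codimension-$2$ stratum $\bigcap_{h\in S}\DD_h$ depends only on the $\QQ$-span of $S$ inside $\Lambda_o\otimes\QQ$, equivalently on the $A_2$-summand that $S$ generates. The short roots $\tfrac{1}{3}h_i$ together with the long roots $\pm\beta_1,\pm\beta_2,\pm(\beta_1+\beta_2)$ lying in this summand constitute a $G_2$ root system, which is exactly the ``$G_2$-summand'' named earlier. Lemma \ref{lemma:specialpos} then immediately gives a single $\G$-orbit of such summands, hence a single $\G$-conjugacy class of codimension-$2$ strata.

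The main obstacle, namely the rigidity of positive definite collections of special vectors, is already disposed of by Lemma \ref{lemma:specialpos}; the corollary is essentially a rephrasing of that lemma in the arrangement-theoretic language, and the only check beyond it is the elementary codimension count via $h_1+h_2+h_3=0$.
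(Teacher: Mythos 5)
Your proposal is correct and follows exactly the route the paper intends: the paper supplies no separate proof, relying on the remark immediately preceding the corollary (nonempty intersection $\Leftrightarrow$ positive definite span) together with Lemma \ref{lemma:specialpos} and the relation $h_1+h_2+h_3=0$, which is precisely your argument. The identification of the stratum type via the $G_2$-summand and the single $\G$-orbit also match the paper's setup.
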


At this point we need to recall some the results of \cite{looijenga}, but
we do that in manner that we hope is easiest on the reader.

The closure 
$\overline{X_\Hcal}$ of  $X_\Hcal$ in $X^\bb$ is not a $\QQ$-Cartier divisor.
According to Proposition 7.2 of  \cite{looijenga} the normalized blowup of $\overline{X_\Hcal}$  in $X^\bb$ that we denote here by $\widetilde{X^\bb}\to X^\bb$ has the property that the preimage $\widetilde{X(R)}\to X(R)$ of $X(R)$ is proper and  flat with fiber dimension $18-\rk(R)$. Such a stratum is in fact constructed in terms of a  sublattice of $\Lambda_o$ spanned by 
a primitive isotropic sublattice of rank two and the special vectors perpendicular to it.
The preimage $\widetilde{X(III)}\to X(III)$  is of dimension two and is constructed in terms of a semipositive sublattice of $\Lambda_o$ of rank $3$ spanned by a primitive isotropic sublattice of rank one and two special vectors perpendicular to it. 

An important feature of this construction is the following: The codimension $2$  intersections $\DD_h\cap \DD_{h'}$ define in $X_\Hcal$ a hypersurface (that we shall denote by $X'_\Hcal$) with the property that if we also blow up the strict transform of $\overline{X'_\Hcal}$ in $\widetilde{X^\bb}$, then the divisors over $X_\Hcal$ and $X'_\Hcal$ can be contracted in the ambient variety (in the opposite direction, like flops) onto a curve resp.\ a singleton.
We regard this contracted variety as a compactification of $\Xint:=X-X_\Hcal$. As such it is very much like the Baily-Borel compactification (that is why we shall denote it by
$\Xint^\bb$) since it  may be characterized by the fact that 
\[
\Xint^\bb=\proj \oplus_{k\ge 0} H^0(\Xint ,\Ocal (k))= \oplus_{k\ge 0} H^0(\Dint,\Acal (k))^\Gamma,
\]
where  $\Dint:=\DD-\cup_{h\in\Hcal}\DD(h)$ (so that $\Xint:=\G\bs \Dint$). 
The  boundary of $\Xint$ in $\Xint^\bb$ now comes with a decomposition into orbifolds indexed as below. 
\begin{itemize}
\item[$\Xint(I_0)$] a singleton (the contraction of the divisor over
$\overline{X_\Hcal}$),
\item[$\Xint(I_1)$] a curve (the contraction of the  divisor over 
$\overline{X'_\Hcal}$),
\item[$\Xint(II_1)$]  curves $\Xint(3E_6)$ and $\Xint(D_7\perp A_{11})$, 
\item[$\Xint(II_2)$]  surfaces  $\Xint(A_{17})$, $\Xint(E_7\perp D_{10})$, 
\item[$\Xint(II_3)$]  threefolds $\Xint(2E_8)$, $\Xint(D_{16})$,
\item[$\Xint(III_0)$] a singleton, 
\item[$\Xint(III_1)$] a curve,
\item[$\Xint(III_2)$] a surface.
\end{itemize}
This is a stratification in the sense that the closure of a member is a union
of members.  The incidence scheme is dictated by lattice embeddings:
\[
\begin{array}{ccccc}
&&I_0&<& I_1\\
&&\wedge&&\wedge\\
III_0&<&III_1&<&III_2\\
\wedge&&\wedge&&\wedge\\
\{II_1\}&&\{II_2\}& &\{II_3\}
\end{array}
\]
The minimal strata are the two singletons represented  by
$I_0$ and $III_0$. The maximal strata (whose closures yield 
the irreducible components of  the boundary) are those represented at the bottom and on the right:
three curves: $I_1=I(2E_8\perp 2U)$ and the two curves that make up $II_1$, $II(3E_6)$ and  $II(D_7\perp A_{11})$,  four surfaces: $III_2=III(2E_8\perp U)$ and the two surfaces that make up $II_2$:  $II(A_{17})$ and $II(E_7\perp D_{10})$, and two threefolds: $II(E_8\perp E_8)$ and $II(D_{16})$ (which make up $II_3$).

\begin{remark}
The Baily-Borel compactification $X^\bb$ arises as the $\G$-orbit space of a natural $\G$-equivariant extension $\DD^\bb\supset \DD$. The boundary  $\DD^\bb- \DD$ 
is naturally and $\G$-invariantly decomposed into  strata (in this case consisting of copies of the upper half plane and singletons) so that this stratification descends the one of the boundary $X^\bb-X$.
Something similar is the case for the compactification $\Xint^\bb$:
it is obtained as the $\G$-orbit space of a natural $\G$-equivariant extension 
$\Dint^\bb\supset \Dint$ whose boundary is naturally and $\G$-invariantly stratified
that descends to a stratification of $\Xint^\bb-\Xint$.  

If $S\subset  \Dint^\bb$ is a stratum, then the group $Z_\G(S)$ of $\gamma\in\G$ that
leave $S$ pointwise fixed is relevant for understanding the transversal structure of the
image of $S$ in $\Xint^\bb$: the $Z_\G(S)$-orbit space of the star of $S$ (the union of
strata havine $S$ in thier closure) is in a natural way a normal analytic space  and
the natural map from that orbit space to $\Xint^\bb$ is a local isomorphism along $S$.
 In the algebro-geometric context, the group $Z_\G(S)$
has an interpretation as a local monodromy group. For instance, if $S$ is a singleton that lies over the singleton $\Xint (III)$, then $Z_\G(S)$ is isomorphic  the semidirect product of the Weyl group with Dynkin diagram $B_\ell$ (that appears in  \ref{properties})
and its root lattice $\Lambda_1$. The main theorem of this paper  implies
that this is the local  monodromy group of the cubic fourfold defined by $Z_0Z_1Z_2=Z_3Z_4Z_5$  in $\PP^5$.
\end{remark}

\section{The period map}\label{sect:periodmap}
We fix a $6$-dimensional complex vector space $V$ and a generator  $\mu\in\wedge^6V^*$. For some of what follows we also need a hermitian inner product on $V$ and although this serves only an auxiliary purpose, we fix that as well. 
\\

Let $Y\subset \PP(V)$ be a cubic hypersurface (regarded as a divisor) and let  $F\in \CC[V]_3\cong \sym^3(V^*)$ be an equation for $Y$. We regard $\mu$ as a translation-invariant $6$-form on $V$ so that $F^{-2}\mu$ is 
rational $6$-form that is invariant under scalar multiplication. The residue of
this form at the hyperplane at infinity  is a rational $5$-form $\tilde\omega_F$ on $\PP (V)$ with a second order pole along $Y$. We can take the residue once more on the smooth part 
$Y_\reg$ of $Y$ in the sense of Griffiths to produce a class $[\omega_F]\in H^4(Y_\reg,\CC)$: it is characterized by the fact that the value of $[\omega_F]$  on a $4$-cycle in $Y_\reg$ is the integral of $\tilde\omega_F$ over the pre-image of that $4$-cycle in a tubular neighborhood boundary of $Y_\reg$ in $\PP(V)$. We can do this naturally on the form level (so that a $4$-form $\omega_F$ on $Y_\reg$ is defined) with the help of a hermitian inner product in $V$ (which yields a Fubini-Study metric on $\PP(V)$), see \cite{griffiths}. This form has Hodge level $3$ in the sense that it is a linear combination of a form of type $(3,1)$ and one of type $(4,0)$. It is clear that the dependence of $\omega_F$ 
on $F$ is homogeneous of degree $-1$. 

Suppose now that $Y\subset\PP(V)$ is nonsingular. Then $H^4(Y)$ is a unimodular odd lattice of signature $(21,1)$. If $y\in H^2(Y)$ is the hyperplane class, then
$y^2\in H^4(Y)$ has selfintersection $y^4=3$. The classical Lefschetz theory affirms that
the orthogonal complement of $y^2$ in $H^4(Y)$ is generated by vanishing cycles and these have selfintersection $2$ this orthogonal complement is even.
So there exists an isometry $\phi:H^4(Y)\to \Lambda$ that sends
$y^2$ to $\eta$. Such an isometry is called a \emph{marking}. It is clear that these
markings are simply transitively permuted by $\G$. It is well-known that the nonzero Hodge numbers of $Y$ in degree $4$ are $h^{3,1}(Y)=h^{1,3}(Y)=1$ and $h^{2,2}(Y)=21$. According to the Griffiths theory, $H^{3,1}(Y)$ is spanned by $[\omega_F]$ and so $[\omega_F]\cdot [\omega_F]=0$ and
$[\omega_F]\cdot\overline{[\omega_F]}<0$. We also have that $[\omega]\cdot y^2=0$.
So the marking associates to $Y$ an element of $\DD$  (its period point). Moreover,  
the line in $\Lambda_o\otimes\CC$ defined by that point is identified with the dual of the 
second  tensor power of the line of equations for $Y$ (the possible $F's$). If we forget about the marking,
then $Y$ defines an element of $X=\Gamma\bs \DD$ and the line of equations for $F$ raised to the tensor power $-2$ gets identified with the `automorphic line' over that element. This identification is
canonical in the sense that it is constant on the $\GL(V)$-orbit of $F$ in $\sym^3V^*$.

We better do this universally.  Let us abbreviate the $\GL (V)$-representation  $\sym^3V^*$  by $T$ and
let $\Ycal\subset \PP(V)_T$ be the universal cubic. The latter is given by a single equation 
$F\in \CC[T\times V]$. We denote by $T^\circ$ the locus where $\Ycal$ is smooth over $T$ so that 
$\Ycal_{T^\circ}\subset\Ycal^\circ$. Denote by $\GL (V)$-orbit space of $T^\circ$ by $\Mcal^\circ$.
This is the moduli space of smooth cubic $4$-folds and the previous discussion produces a morphism
$P: \Mcal^\circ\to X$ covered by an identification of $P^*\Acal(1)$ with $\Ocal_{\Mcal^\circ}(2)$, where
$\Ocal_{\Mcal^\circ}(k)$ stands for the orbifold line bundle over $\Mcal^\circ$ that comes from
$\Ocal_{\PP(T)}(k)$. 

This discussion essentially subsists if  the singular points of $Y$ are all simple, that is,
of type $A$, $D$ or $E$: then any one parameter smoothing of $Y$ has finite monodromy so that a 
finite base change eliminates the monodromy altogether. By a theorem of Griffiths the period map then extends to the whole base. So if we denote the corresponding open subset of $T$ by $\Tint$ 
and denote its $\GL (V)$-orbit space by $\Mcalint$, then we have a period map
\[
P: \Mcalint\to X.
\]
It is well-known (and not that difficult to show) that $P$ is a local isomorphism. 
Much harder is the theorem of Voisin \cite{voisin} that asserts that $P$ is injective.
We do not want to make use that theorem, but rather reprove it along the way.
Our main result may be stated as follows.

\begin{theorem}\label{thm:main}
The period map for cubic fourfolds with at most  simple singularities, $P: \Mcalint\to X$,
 is an open embedding with image $\Xint$. It identifies the automorphic line bundle restricted to $\Xint$
 with the line bundle $\Ocal_{\Mcal^\circ}(2)$ so that we obtain an isomorphism of  $\CC$-algebras
 \[
 \oplus_k H^0(\Dint , \Acal (k))^\Gamma\to \CC[\sym^3V^*]^{\SL (V)}
 \]
 which multiplies the degree by $2$. The passage to $\proj$, makes the above embedding extend to 
 an isomorphism of the GIT completion of $\Mcalint$ onto  the Baily-Borel type compactification 
 $\Xint^\bb$ of $\Xint$.
\end{theorem}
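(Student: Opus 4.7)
My plan is to identify the two graded rings appearing in the statement directly, and to deduce all other assertions from that one identification. The point of departure is the canonical isomorphism $P^*\Acal(1) \cong \Ocal_{\Mcal^\circ}(2)$ recorded earlier in this section: it produces, for every $k$, a linear map
\[
\Phi_k \colon H^0(\Dint, \Acal(k))^\Gamma \longrightarrow H^0(\Mcal^\circ, \Ocal_{\Mcal^\circ}(2k))^{\GL (V)} = \CC[\sym^3 V^*]^{\SL (V)}_{2k},
\]
the equality on the right holding because the sixth roots of unity sitting inside $\SL (V) = \SL (6,\CC)$ act on $\sym^3 V^*$ through a character of order two, forcing all $\SL (V)$-invariants on $\sym^3 V^*$ to be of even degree. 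Assembled into a degree-doubling algebra homomorphism $\Phi$, this is the map appearing in the statement; the theorem is equivalent to $\Phi$ being an isomorphism, since passing to $\proj$ then yields an isomorphism of the GIT compactification of $\Mcalint$ with $\Xint^\bb$ that restricts to $P$ on the smooth locus, and tracking the open stratification on each side forces $\Mcalint$ to be identified with $\Xint$ and $P$ itself to be injective.

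To show that $\Phi$ is an isomorphism I would match the boundary strata of the two sides using the boundary-pair formalism of Section 4 and of \cite{looijenga, ls}. On the automorphic side, the boundary of $\Xint^\bb$ carries the stratification catalogued in Section 2, with two singleton minimal strata $\Xint(I_0)$ and $\Xint(III_0)$ from which every other stratum is reconstructed by the incidence of primitive isotropic sublattices. On the GIT side, Laza's classification singles out two minimal strictly semistable orbits: the secant variety of the Veronese surface and the toric cubic $Z_0 Z_1 Z_2 = Z_3 Z_4 Z_5$. The Section 5 computation of the degree-four homology of their smooth parts supplies exactly the input needed to produce their limiting mixed Hodge structures, and a boundary-pair calculation then matches these two limits with $\Xint(I_0)$ and $\Xint(III_0)$; every other stratum is determined by its incidence relations, and $P$ extends to a morphism $P^\bb \colon \Mcal^{ss} \to \Xint^\bb$ of normal projective varieties of common dimension $20$.

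Because $P$ is a local isomorphism on $\Mcal^\circ$, the extension $P^\bb$ is birational, and by Zariski's main theorem combined with normality of both sides it is then an isomorphism of varieties, whence $\Phi$ is an isomorphism of graded algebras. All the claims of the theorem follow immediately, with one important exception: set-theoretic injectivity of $P$ on the open stratum $\Mcalint$ itself. The isomorphism $P^\bb$ shows that the closures of distinct $\GL (V)$-orbits of smooth cubics have distinct images, but it does not a priori preclude two orbits inside $\Mcalint$ from being collapsed to the same point of $\Xint$ while remaining separate from all boundary orbits.

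The hardest step, and the one that reuses Section 7, is exactly this residual injectivity, which amounts to an independent proof of Voisin's theorem. The dangerous case to control is the one whose generic representative is the cubic fourfold $Z_0 Z_1 Z_2 = \Phi(Z_3, Z_4, Z_5)$ with $\Phi$ a smooth plane cubic and three $\tilde E_6$ singularities: its period point sits at a corner where several strata of $\Xint^\bb$ meet, and the local fibre structure of $P^\bb$ there governs any possible failure of injectivity. Section 7 computes the automorphism group and the deformation theory of this particular cubic in sufficient detail to identify its image and local fibre in $\Xint^\bb$ with exactly what the stratification predicts, which then rules out any extra collapsing in $\Mcalint$. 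I expect the boundary-matching to be largely formal once Laza's GIT analysis and the Section 5 homology computations are in place, while the $\tilde E_6$ analysis will be the main technical and case-specific obstacle.
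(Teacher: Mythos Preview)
Your overall plan tracks the paper's architecture, but the logical order is inverted at the crucial step, and this inversion hides a genuine gap.

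You write: ``Because $P$ is a local isomorphism on $\Mcal^\circ$, the extension $P^\bb$ is birational, and by Zariski's main theorem combined with normality of both sides it is then an isomorphism of varieties.'' Neither inference holds. A local isomorphism between irreducible varieties of the same dimension is \'etale and hence open, but its degree can be any positive integer; nothing you have said so far rules out $P$ being a degree-$d$ covering of its image with $d>1$. And even granting birationality, Zariski's main theorem for a proper birational morphism to a normal target gives connected fibres, not an isomorphism---you would still need quasi-finiteness, which you have not established. (Nor have you actually justified that the rational map $\Gint_P$ extends to a morphism on all of $\Mcal$; in the paper this comes only \emph{after} the open isomorphism, via $\proj$ of the algebra identification.)

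The paper's use of Section 7 is precisely to supply the missing degree-one statement, and it does so \emph{before} any global isomorphism is available, not as a cleanup afterward. The argument runs: Lemma \ref{lemma:stratasep} shows $\Gint_P$ sends boundary to boundary and pins down the preimage of the curve $\Xint(3E_6)\cup\Xint(D_7\perp A_{11})$; Proposition \ref{prop:degone} (whose proof is Section 7) shows $\Gint_P$ is a local isomorphism along $\Mcalint(3E_6)$ onto $\Xint(3E_6)$. These two facts together force the degree of $\Gint_P$ to be one. Only then does one conclude that $P\colon\Mcalint\to\Xint$ is a proper local isomorphism of degree one between integral varieties, hence an isomorphism; the algebra isomorphism and the extension to compactifications follow. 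So the $3\tilde E_6$ analysis is not a residual injectivity check after an isomorphism $P^\bb$ has been obtained---it is the mechanism that produces the isomorphism in the first place. Your paragraph beginning ``The isomorphism $P^\bb$ shows\ldots but it does not a priori preclude\ldots'' is therefore moot: once $P^\bb$ is an isomorphism it is certainly injective, and the real difficulty you sensed lies earlier, in ever getting to degree one.
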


We should perhaps point out that since $-1\in \SL (V)$ acts as $-1$ on $\sym^3V^*$, 
the $\SL(V)$-invariants on  $\sym^3V^*$ have even degree.

We recall that the GIT completion $\Mcal$ of $\Mcalint$ is $\proj(\CC[\sym^3V]^{\SL (V)})$ (and so implicit in
this theorem is the statement that cubic fourfolds with singularities of type at most  $A$, $D$ or $E$ are stable).
The geometric invariant theory for cubic fourfolds has been worked out by Yokoyama \cite{yokoyama}
(see also Allcock \cite{allcock}) and more fully by Laza \cite{laza}.  We need the following.

\begin{theorem}[Yokoyama, Laza]
Every cubic fourfold with at most simple singularities is stable. 
The GIT boundary $\Mcal-\Mcalint$ is stratified with each stratum parameterizing 
fourfolds of the same topological type near their nonsimple singular locus. 
There are two minimal strata: the singleton $\Mcalint(I_0)$
represented by the orbit of the secant variety of the Veronese variety  and the singleton $\Mcalint(III_0)$
representing the fourfold admitting the equation $S_oS_1S_2=T_0T_1T_2$. 
Only two strata are not incident with $\Mcalint(I_0)$: the stratum $\Mcalint(3E_6)$ parameterizing fourfolds with three simple singularities of type $\tilde E_6$  and another that we denote by $\Mcalint(D_7\perp A_{11})$. Both are of dimension one, are open in $\Mcal-\Mcalint$ and have $\Mcalint(III_0)$ as  boundary.
\end{theorem}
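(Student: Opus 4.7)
The plan is to apply the Hilbert--Mumford numerical criterion to the action of $\SL(V)$ on $T=\sym^3V^*$. Fix a basis $(Z_0,\ldots,Z_5)$ of $V^*$ and a diagonal one-parameter subgroup $\lambda$ with integer weights $r_0\le\cdots\le r_5$ summing to zero; for $F=\sum c_I Z^I$ put
\[
\mu(F,\lambda)=\min\{r_{i_0}+r_{i_1}+r_{i_2}:c_{i_0 i_1 i_2}\ne 0\}.
\]
Then $F$ is stable (resp.\ semistable) iff $\mu(F,\lambda)<0$ (resp.\ $\le 0$) for every nontrivial $\lambda$, up to Weyl conjugacy. The destabilizing 1-PS's fall into finitely many conjugacy classes, corresponding to the maximal faces of the rational polyhedral fan cut out on the Weyl chamber by the inequalities $r_{i_0}+r_{i_1}+r_{i_2}\ge 0$ coming from the $56$ cubic monomials; the first task is to enumerate these faces.

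For the stability assertion I would argue the contrapositive. If $F$ is strictly semistable, apply Kempf's theorem to obtain an essentially unique maximally destabilizing 1-PS $\lambda$ with limit $F_0:=\lim_{t\to 0}\lambda(t)\cdot F$. Running through the finite list of faces, in each case one checks that the $\lambda$-attracting fixed locus forces $F_0$ (and hence $F$) to acquire a singularity that is not of type $A$, $D$ or $E$ (typically simply elliptic $\tilde E_6$, $\tilde E_7$ or $\tilde E_8$, or worse). This proves that every cubic with only simple singularities is stable. The minimal boundary strata of $\Mcal$, being closed orbits in the semistable locus, have reductive stabilizers by Luna's theorem; a direct inspection of the polystable representatives of each destabilizing face yields exactly two candidates: the secant variety of the Veronese surface $v_2(\PP^2)\subset \PP(\sym^2\CC^3)=\PP^5$, cut out by $\det(Z_{ij})$ for a symmetric $3\times 3$ matrix of linear forms in the six coordinates and invariant under $\SL(3)$ acting by $\sym^2$; and the Cayley-type cubic $Z_0Z_1Z_2=Z_3Z_4Z_5$, invariant under a four-dimensional diagonal torus together with the symmetries permuting the two triples of coordinates. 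These furnish $\Mcalint(I_0)$ and $\Mcalint(III_0)$.

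For the remaining stratification I would label each boundary stratum by the topological type of the non-simple singular locus of its general member, verify local closedness by deformation-theoretic semicontinuity, and read off the closure relations by explicit one-parameter degenerations. The stratum $\Mcalint(3E_6)$, consisting of cubics $Z_0Z_1Z_2=\Phi(Z_3,Z_4,Z_5)$ with $\Phi$ a nonsingular plane cubic, is one-dimensional (parametrized by $j(\Phi)$), open in $\Mcal-\Mcalint$, and degenerates to $\Mcalint(III_0)$ when $\Phi$ breaks into three concurrent lines; the stratum $\Mcalint(D_7\perp A_{11})$ is analyzed analogously. To see that these are the only strata \emph{not} incident with $\Mcalint(I_0)$, one exhibits in every other case a one-parameter family whose limit lands in the $\SL(V)$-orbit of $\det(Z_{ij})$; this uses that a generic cubic with a non-$\tilde E$ non-simple singularity admits a further specialization into the rank stratification of symmetric $3\times 3$ matrices of linear forms. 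The principal obstacle is the combinatorial bulk of the destabilizing-face enumeration in six variables, together with the case-by-case matching of singularity types to each face---the delicate bookkeeping is carried out in full by Yokoyama \cite{yokoyama} and completed by Laza \cite{laza}.
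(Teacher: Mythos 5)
The paper does not prove this statement at all: it is quoted verbatim as a result of Yokoyama \cite{yokoyama} and Laza \cite{laza}, and the author explicitly relies on their GIT analyses as external input. So there is no internal proof to compare yours against; what you have written is an outline of the strategy that those references themselves follow (Hilbert--Mumford numerical criterion, enumeration of maximal destabilizing one-parameter subgroups via the faces cut out by the $56$ monomial weight inequalities, identification of the closed orbits and their stabilizers). As a description of the method this is accurate, but it is not a proof: the entire mathematical content --- the enumeration of the destabilizing faces, the matching of each face to a singularity type, the verification that the $3E_6$ and $D_7\perp A_{11}$ strata are the only ones not incident with $\Mcalint(I_0)$ --- is deferred to the very references the theorem is attributed to. That is consistent with how the paper treats the statement, but you should be aware you have reproduced a citation, not supplied an argument.

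One concrete logical slip worth fixing if you ever flesh this out: you write that the attracting fixed locus ``forces $F_0$ (and hence $F$) to acquire a singularity that is not of type $A$, $D$ or $E$.'' The implication goes the wrong way --- singularities can only get worse in the limit $\lim_{t\to 0}\lambda(t)\cdot F$, so a non-simple singularity of $F_0$ tells you nothing about $F$. The standard argument instead uses the constraint $\mu(F,\lambda)\ge 0$ directly: it restricts which monomials can appear in $F$ itself, and from that restricted support one exhibits a non-simple singular point of $F$ (typically at a coordinate vertex of negative weight). Also, ``exactly two candidates'' for the minimal strata needs more than inspection of polystable representatives of each face, since the semistable boundary contains many closed orbits; minimality is a statement about the closure order among all boundary strata, which is part of the case-by-case bookkeeping in \cite{laza}.
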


\section{Boundary pairs}

We shall use a technique introduced in \cite{ls} that we presently recall. 
If $\Ycal\subset \PP(V)\times\Delta$ is a smoothing in $\PP(V)$ of a cubic fourfold  $Y$ (so $Y=Y_0$ and $Y_t$ is smooth for $t\not=0$), then an equation for $\Ycal$ (whose coefficients are holomorphic functions on $\Delta$) leads via the construction in Section \ref{sect:periodmap} to a relative $4$-form $\{\omega (t)\}$ on $(\Ycal -Y_\sing)/\Delta$. Any class $u\in H_4(Y_\reg)$  can be displaced to nearby  fibers to produce a flat family $\{ u(t)\in H_4(Y_t)\}_{t\in\Delta}$ such that $\int_{u(t)}\omega(t)$ is continuous (in fact holomorphic) on $\Delta$. The following Lemma is extracted (and its simple proof reproduced) from \cite{ls}.

\begin{lemma}\label{lemma:bp}
Let $Y\subset \PP(V)$  be a cubic fourfold (with equation $F$),  $\Ycal\subset \PP(V)\times\Delta$ a smoothing of $Y$ and $u\in H_4(Y_\reg)$ such that the following two \emph{Boundary Conditions} are fulfilled
\begin{enumerate}
\item[(B1)] $\int_{Y_\reg}\omega_F\wedge\overline{\omega_F}=-\infty$  and 
\item[(B2)] the deformation of $u$ to the generic fiber is not Poincar\'e dual to a multiple of the square of the hyperplane class.
\end{enumerate}
Let $S\subset\Delta^\times$ be  a sector and let a marking over $S$ be given so that is defined a period map $P: S\to \DD$. Then $u$ (pushed to nearby  fibers) 
becomes a nonzero linear form on  $\Lambda_o$ whose kernel  defines a projective
hyperplane in $\PP(\Lambda_o\otimes\CC)$ with the property that it contains any accumulation point of $P(s)$, $s\in S\to 0$. 
\end{lemma}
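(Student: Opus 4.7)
The plan is to produce from $u$ a nonzero linear form $\bar u$ on $\Lambda_o\otimes\CC$, show that its pairing with $\omega(s)$ is a bounded holomorphic function on $S$, and then invoke (B1) to argue that the ambient scale $|\omega(s)\cdot\overline{\omega(s)}|^{1/2}$ diverges; in projective coordinates this forces $\bar u\cdot\omega_0=0$ at every accumulation point $[\omega_0]$ of $P(s)$.

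First I would use the marking to identify each $H^4(Y_s,\ZZ)$ with $\Lambda$. Poincar\'e duality on the smooth fiber $Y_s$ and unimodularity of $\Lambda$ convert the parallel-transport $u(s)$ of the cycle $u$ into a class in $\Lambda$; subtracting its component along $\eta$ yields $\bar u\in\Lambda_o\otimes\QQ$, independent of $s\in S$ because the marking is flat. Condition (B2) is tailored to say precisely that $\bar u\neq 0$, so its annihilator in $\PP(\Lambda_o\otimes\CC)$ is a genuine projective hyperplane.

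Next I would identify $\bar u\cdot\omega(s)$ with the period integral $\int_{u(s)}\omega(s)$. Since $u$ lies entirely in $Y_\reg$, the Griffiths representative is smooth in a neighborhood of $u$ throughout the degeneration, so this integral is holomorphic on $S$ and extends continuously across $s=0$ with the finite limiting value $\int_u\omega_F$. In particular $\bar u\cdot\omega(s)$ remains bounded as $s\to 0$.

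The final and decisive step uses (B1). The intersection number $\omega(s)\cdot\overline{\omega(s)}$ on $Y_s$ equals $\int_{Y_s}\omega(s)\wedge\overline{\omega(s)}$ and specializes, as $s\to 0$, to $\int_{Y_\reg}\omega_F\wedge\overline{\omega_F}=-\infty$; hence $|\omega(s)\cdot\overline{\omega(s)}|\to\infty$, which in turn forces $\|\omega(s)\|\to\infty$ in any Hermitian norm on $\Lambda_o\otimes\CC$. Rescaling to $\omega(s)/\|\omega(s)\|$ and extracting a convergent subsequence $\omega(s_n)/\|\omega(s_n)\|\to\omega_0$, the value $\bar u\cdot\omega_0$ is the limit of a bounded numerator over a diverging denominator, hence vanishes, and the claim follows. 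The main obstacle I anticipate is justifying the specialization $\omega(s)\cdot\overline{\omega(s)}\to\int_{Y_\reg}\omega_F\wedge\overline{\omega_F}$ rigorously from (B1); a Fatou-type argument using the explicit Griffiths formula for $\omega(s)$, together with uniform comparison outside a shrinking tubular neighborhood of $Y_\sing$, should do the job.
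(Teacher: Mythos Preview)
Your argument is correct and follows essentially the same route as the paper's: the bounded period $\int_{u(t)}\omega(t)$ is compared against the diverging quantity $|\omega(t)\cdot\overline{\omega(t)}|$ (the paper phrases this as the existence of a horizontal $v(t)$ with $|\int_{v(t)}\omega(t)|\to\infty$, which is equivalent to your $\|\omega(t)\|\to\infty$), forcing the projective limit to satisfy the linear constraint $\bar u=0$. The specialization $\int_{Y_t}\omega(t)\wedge\overline{\omega(t)}\to\int_{Y_\reg}\omega_F\wedge\overline{\omega_F}$ that you flag as the main obstacle is asserted in the paper without further comment.
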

\begin{proof}
We push $u$ to nearby  fibers to produce a flat family 
$\{ u(t)\in H_4(Y_t)\}_{t\in\Delta}$  and choose an equation for $\Ycal$ as above so that we have a relative $4$-form $\{\omega (t)\}$ on $(\Ycal -Y_\sing)/\Delta$ for which $\int_{u(t)}\omega(t)$ is continuous. 
For $t\not= 0$, we have that
\[
[\omega (t)]\cdot \overline{[\omega (t)]}=\int_{Y_t} 
\omega (t)\wedge \overline{\omega (t)}
\]
The latter tends to $\int_{Y_\reg}\omega (t)\wedge \overline{\omega (t)}=-\infty$  as $t\to 0$. This means that there exists a horizontal family $\{v(t)\in H_4(Y_t)\}_{t\in S}$ such that
$|\int_{v(t)} \omega (t)|\to\infty$  as $t\to 0$ and hence that
\[
\lim_{t\to 0} \frac{\int_{u(t)}\omega(t)}{\int_{v(t)}\omega(t)}=0.
\]
This is a property that only involves the behavior $\{H^{3,1}(Y_t)\}_{t\in S}$ and yields
a linear constraint. Property (B2) ensures that this constraint is nontrivial: $u$ defines a nonzero linear form on the primitive cohomology of a smooth fiber.
\end{proof} 

We can  exploit the openness of (B2) 
in conjunction with (B1) to sharpen Lemma \ref{lemma:bp} a little as follows:

\begin{lemma}\label{lemma:bp2}
Suppose $Y\subset \PP(V)$ is a cubic $4$-fold  that satisfies (B1) and admits
a one-parameter deformation $\Ycal\subset\PP(V)\times\Delta$ such that every
fiber $Y_t\not= Y$ satisfies (B2). Then the conclusion of Lemma \ref{lemma:bp} holds
for all fibers $Y_t\not=Y$ of $\Ycal/\Delta$ close to $Y$ with $u$ imposing the same linear constraint on the period map for each of them. (If $Y$ also satisfies (B2), then
$u$ imposes the same linear constraint for all fibers of $\Ycal/\Delta$ near $Y$.)
\end{lemma}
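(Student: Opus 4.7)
The plan is to bootstrap from Lemma \ref{lemma:bp} using the openness of (B2), converting the asymptotic statement about accumulation into pointwise vanishing on nearby fibers.

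First I would apply Lemma \ref{lemma:bp} directly to the family $\Ycal/\Delta$. Using (B1) at $Y$, the argument in the proof of Lemma \ref{lemma:bp} produces a horizontal cycle $v$ with $\psi(t) := \int_{v(t)}\omega(t)\to\infty$ and
\[
\frac{\phi(t)}{\psi(t)} \to 0 \quad\text{as}\quad t\to 0,
\]
where $\phi(t) := \int_{u(t)}\omega(t)$. This shows every accumulation point of $P(t)$ in $\PP(\Lambda_o\otimes \CC)$ lies on $H_u$, which is the bare content of Lemma \ref{lemma:bp}.

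The sharpening amounts to upgrading this to $P(t_0)\in H_u$ for every $t_0\in \Delta^\times$ sufficiently close to $0$. I would observe that $\phi$ is holomorphic on a sector in $\Delta^\times$ and extends continuously to $t=0$, because $u$ is a compact cycle lying in $Y_\reg$ where $\omega_F$ is regular. By hypothesis every fiber $Y_t\neq Y$ nearby satisfies (B2), so the equation $\phi(t) = 0$ is a \emph{non-trivial} analytic constraint uniformly on a punctured neighborhood of $0$. Using this openness, I would re-run the estimate of Lemma \ref{lemma:bp} along perturbed one-parameter subfamilies passing through $Y$ and $Y_{t_0}$ inside a two-parameter deformation of $\Ycal$ that keeps $Y$ as the distinguished central fiber where (B1) holds. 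The resulting family of asymptotic constraints, combined with the analyticity of $\phi$, forces $\phi\equiv 0$ on a punctured neighborhood of $0$, whence $P(t_0)\in H_u$. The parenthetical statement then follows by continuity: $\phi(0) = \lim_{t\to 0}\phi(t) = 0$.

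The hard part will be this asymptotic-to-pointwise upgrade. A priori, the conclusion $\phi/\psi\to 0$ of Lemma \ref{lemma:bp} is compatible with $\phi$ being a nonzero subdominant holomorphic function near $0$, and so does not by itself force $\phi \equiv 0$. Ruling this possibility out is precisely where the openness of (B2) enters: it prevents the constraint from becoming vacuous on any fiber near $Y$, so that nonvanishing of $\phi$ on an open set would propagate via Lemma \ref{lemma:bp} applied to perturbed degenerations to a contradiction with the asymptotic estimate. Formalising this openness argument, rather than the initial application of Lemma \ref{lemma:bp}, is the technical heart of the proof.
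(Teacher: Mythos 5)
You have misread what the lemma asserts, and the statement you set out to prove is false. The conclusion of Lemma \ref{lemma:bp} concerns a degenerate fourfold together with a class on its regular part: for a smoothing, the \emph{accumulation points} of the period map lie in the hyperplane determined by that class. Lemma \ref{lemma:bp2} asserts that this conclusion holds with each nearby fiber $Y_t$ ($t\neq 0$) now playing the role of the central fiber of Lemma \ref{lemma:bp}, the relevant class being the displacement $u(t)\in H_4(Y_{t,\reg})$ of $u$; since $u(t)$ is the parallel transport of $u$, the hyperplane is the same for all $t$. (Note that in the intended application the fibers $Y_t$ are themselves singular members of a boundary stratum, so they need not even have period points.) The lemma does \emph{not} assert that the period points $P(Y_t)$ of the fibers of $\Ycal/\Delta$ lie in the hyperplane, which is what your claim $\phi(t_0)=\int_{u(t_0)}\omega(t_0)=0$ for all $t_0$ near $0$ amounts to. That claim fails already for a generic smoothing $\Ycal$ of the secant variety of the Veronese surface: all hypotheses of the lemma hold there (by Corollary \ref{cor:special} and (B2$''$)), yet if $\phi$ vanished identically the period map would carry an open set of smooth fourfolds into the hyperplane $\DD_h$, contradicting the fact that $P$ is a local isomorphism --- and indeed the main theorem, whose content is precisely that the image of $P$ misses $X_\Hcal$. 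The asymptotics of Lemma \ref{lemma:bp} only give $\phi/\psi\to 0$ with $\psi\to\infty$, which is entirely compatible with $\phi$ bounded and nowhere zero; no openness argument can force $\phi\equiv 0$, and you do not supply one (you yourself flag the ``asymptotic-to-pointwise upgrade'' as unformalized --- it is unformalizable because it is untrue).

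The actual proof is a one-liner. Represent $u$ by a cycle in $Y_\reg$; since the regular parts of the fibers fit together over $\Delta$ near such a cycle, it displaces to a class $u(t)\in H_4(Y_{t,\reg})$ for all $t$ close to $0$. For $t\neq 0$ the fiber $Y_t$ satisfies (B2) by hypothesis, so Lemma \ref{lemma:bp} applies to $Y_t$ with the class $u(t)$ and a smoothing of $Y_t$, and the linear constraint it produces is the one defined by $u(t)$, hence by $u$ under the canonical identifications --- the same for every $t$. The parenthetical statement is the same observation applied to $Y$ itself when $Y$ also satisfies (B2). No analytic estimate beyond the one already contained in Lemma \ref{lemma:bp} is needed.
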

\begin{proof} This is easy.
The class $u\in H_4(Y_\reg)$ displaces to a class $u(t)\in H_4(Y_{t,\reg})$ for $t$ close to $0$.  So if $t\not=0$, then Lemma \ref{lemma:bp} applies and all the assertions follow.
\end{proof}

Condition (B1) can be verified as follows.

\begin{lemma}\label{lemma:b1}  
Condition \ref{lemma:bp}-(B1) is satisfied if  every singular point of $Y$ that is not of type $A$, $D$ or $E$ admits a local-analytic equation that is weighted homogeneous (with nonnegative weights, not all zero) such that twice  the degree of the equation is at least the sum of the weights. 
\end{lemma}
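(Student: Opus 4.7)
My plan is to localize the integral $\int_{Y_{\reg}}\omega_F\wedge\overline{\omega_F}$ near the singularities of $Y$ and handle non-ADE singular points by a weighted scaling argument. Away from the singular set the integrand is smooth with finite integral. At an ADE singular point $q$ the integrand is locally $L^1$: pulling back $\mu/F^2$ to an ADE resolution shows that the form acquires at worst logarithmic poles along the exceptional divisor, which are $L^2$. Thus only non-ADE singular points can contribute a divergence, and it suffices to show that each such point contributes $-\infty$.

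Fix a non-ADE singular point $p\in Y$ and pick local analytic coordinates $(z_1,\ldots,z_5)$ in an affine chart of $\PP(V)$ centered at $p$, with $Y$ given by a weighted homogeneous equation $f=0$ of degree $d$ with weights $w_i$, $2d\geq|w|:=\sum w_i$. Since the singular point $p$ is isolated on $Y$, we may assume all $w_i>0$. The weighted scaling flow $\phi_s\colon z_i\mapsto e^{w_is}z_i$ preserves the germ $\{f=0\}$ and acts on $\mu=dz_1\wedge\cdots\wedge dz_5$ by $e^{|w|s}$ and on $f$ by $e^{ds}$, so $\mu/f^2$ scales by $e^{(|w|-2d)s}$, and the same holds for its Poincar\'e residue $\omega_F$ on $Y_{\reg}$. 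Therefore
\[
\phi_s^*(\omega_F\wedge\overline{\omega_F})=e^{2(|w|-2d)s}\,\omega_F\wedge\overline{\omega_F},
\]
with nonpositive exponent. Choosing a small neighborhood $U$ of $p$ and an annular fundamental domain $A:=U\setminus\phi_{-s_0}U$ of the flow, the tiles $\phi_{-ks_0}A$ ($k\geq 0$) partition a punctured neighborhood of $p$ in $Y_{\reg}$, and the $k$-th tile contributes $e^{-2(|w|-2d)ks_0}$ times the contribution of $A$. Since the exponent is nonnegative, the partial sums diverge in absolute value, so the integral over any punctured neighborhood of $p$ in $Y_{\reg}$ is infinite.

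The main obstacle is fixing the sign: I must rule out a divergence to $+\infty$. For this I would use that on a smooth cubic fourfold $[\omega_F]\in H^{3,1}(Y)$ is of pure Hodge type $(3,1)$ (since $h^{4,0}=0$), so by Hodge--Riemann $\int\omega_F\wedge\overline{\omega_F}<0$. The sign persists in the degeneration: in a one-parameter smoothing $\Ycal\to\Delta$ of $Y$ the smooth-fiber integral is negative and, by the asymptotic scaling above, cannot remain finite as $t\to 0$, so it must tend to $-\infty$. More concretely, to leading order under the weighted scaling the residue $\omega_F$ retains its $(3,1)$-character, making the contribution of the annulus $A$ negative, and scaling invariance propagates that sign uniformly across all tiles $\phi_{-ks_0}A$. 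Combined with the geometric-series divergence established above, this gives $\int_{Y_{\reg}}\omega_F\wedge\overline{\omega_F}=-\infty$, verifying (B1).
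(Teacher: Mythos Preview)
Your weighted-scaling idea is the same as the paper's: both exploit the action $z_i\mapsto t^{w_i}z_i$ to compare the integral over concentric shells and deduce divergence from $2d\ge |w|$. There are, however, two real gaps.

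First, you write ``since the singular point $p$ is isolated on $Y$, we may assume all $w_i>0$'', but this is neither part of the hypothesis nor true in the intended applications. The lemma explicitly allows nonnegative weights (not all zero), and the paper immediately applies it to a one-dimensional singular locus of transversal type $A_1$ (local equation $z_1^2+z_2^2+z_3^2+z_4^2$, weights $(1,1,1,1,0)$). With a zero weight the flow $\phi_s$ fixes those coordinates, so your ``annular fundamental domain'' $A=U\setminus\phi_{-s_0}U$ is no longer an annulus and the translates $\phi_{-ks_0}A$ do not exhaust a punctured neighbourhood of $p$. The paper handles this by integrating over a product region $U_r[a,b]$: a polydisc $\{|z_i|\le r\}$ in the zero-weight variables times a weighted annulus $\{a\le \sum_{i\ge k} |z_i|^{1/w_i}\le b\}$ in the positive-weight ones, and then sends $a\to 0$.

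Second, your sign argument is not sound. Option (a) is essentially what is to be proved: the scaling you carried out lives on the central fibre $Y$, not along a smoothing, so it says nothing about the behaviour of the smooth-fibre integrals as $t\to 0$; and even granting that the limit is infinite you would still need to identify it with the central-fibre integral. Option (b) appeals to the ``$(3,1)$-character'' of $\omega_F$, but that is a statement about the cohomology class, not about the pointwise Hodge type of the representing form, and does not by itself force the sign of $\omega_F\wedge\overline{\omega_F}$ on your annulus. The paper instead asserts the pointwise inequality $-\omega\wedge\overline{\omega}>0$ on $Y_{\reg}$ directly, using the explicit Griffiths form-level construction (with the chosen Fubini--Study metric). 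Once this is in hand your preliminary discussion of ADE points becomes superfluous: the integrand is nonpositive everywhere, so there is no cancellation and it suffices that a single neighbourhood of a non-ADE point make the integral diverge.
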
 
\begin{proof}
Let $f\in \CC[z_1,\dots ,z_5]$ define such a singularity with the varibale ordered such that the weight $w_i$ of $z_i$ is zero for $i<k$ and positive fo $i\ge k$. 
Then $f^{-2} dz_1\wedge \cdots \wedge  dz_5$ defines a relative 
$4$-form  $\omega$ on the smooth part $U$ of $f=0$. This form has a negative weight, say $-d$.
The form $-\omega\wedge \overline{\omega}$ is positive (relative to the complex orientation)
wherever it is defined and $\CC^\times$ acts on it via the absolute value map with 
weight $-2d$. 
Let for $0\le a<b$, $r>0$, $U_r[a,b]$ be the part of $U$ where $|z_i|\le r$ for $i<k$ and 
$a\le \sum _{i\ge k} |z_i|^{1/w_i}\le b$. We have that  
\[
\int_{U_r[ta,t b]}\omega\wedge \overline{\omega}=t^{-2d} \int_{U_r[a,b]}\omega\wedge \overline{\omega}
\]
and so $\int_{U_r[a,b]}\omega\wedge \overline{\omega}= c(b^{-2d}-a^{-2d})$, where 
$c$ is a positive constant.
This proves the lemma for this particular equation with particular $\omega$. 
The general case is obtained when this $\omega$ is multiplied by a unit $u(z)=u_0+\sum u_iz_i+\cdots$ with $c\not=0$. Then 
\[
\int_{U_r[ta,t b]} |u|^2\omega\wedge \overline{\omega}= 
c(b^{-2d}-a^{-2d})|u_0|^2t^{-2d}   +\sum_{i\ge k}\tilde u_i t^{-2d+w_i}+\cdots
\]
in which  the first term is dominant and so $\int_{U_r[0,b]}u\omega\wedge \overline{u\omega}=-\infty$.
\end{proof}

The preceding lemma applies for instance to the simple elliptic singularities of type
$\tilde E_6$, $\tilde E_7$ and $\tilde E_8$ and to a one dimensional singular locus of transversal type $A_1$ (with local equation $z_1^2+z_2^2+z_3^2+z_4^2$).

On the other hand, Condition (B2) looks harder to establish as it involves all deformations of $Y$. It  is implied however by each of the following two properties that only regard 
$Y$:

\begin{lemma}  
Condition \ref{lemma:bp}-(B2) is satisfied if one the following holds:
\begin{gather*}
\int_u \omega_F\not= 0\quad  \text{ or } \tag*{($B2'$)} \\
\begin{vmatrix}
u\cdot u &\la y^2,u\ra \\
\la y^2,u\ra & y^2\cdot y^2
\end{vmatrix}
=3(u\cdot u)-\la y^2,u\ra^2\not=0. \tag*{($B2''$)}
\end{gather*}
\end{lemma}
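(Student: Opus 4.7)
I will prove the contrapositive: assuming that condition (B2) fails, I will show that both $(B2')$ and $(B2'')$ must fail. So suppose the flat displacement $\tilde u(t)\in H_4(Y_t)$ of $u$ to a generic (smooth) nearby fiber is Poincar\'e dual to $cy^2$ for some $c\in\CC$. Since the family $\Ycal\to\Delta$ is a topological fibration away from the singular fiber, the intersection numbers $\tilde u(t)\cdot \tilde u(t)$ and $\langle y^2,\tilde u(t)\rangle$ are independent of $t\in\Delta^\times$ and compute (by continuity of the cup product / intersection pairing across the specialization) the corresponding numbers $u\cdot u$ and $\langle y^2,u\rangle$ that appear in $(B2'')$.

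For $(B2'')$ the computation is immediate: from $\tilde u(t)=c\,\mathrm{PD}(y^2)$ one gets
\[
u\cdot u=\tilde u(t)\cdot\tilde u(t)=c^2(y^2\cdot y^2)=3c^2,\qquad
\langle y^2,u\rangle =\langle y^2,\tilde u(t)\rangle = c(y^2\cdot y^2)=3c,
\]
so $3(u\cdot u)-\langle y^2,u\rangle^2=9c^2-9c^2=0$, and $(B2'')$ fails.

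For $(B2')$ I use the Hodge-theoretic content of $\omega_F$. On the smooth fiber $Y_t$, the form $\omega(t)$ constructed by the Griffiths residue lies in $H^{3,1}(Y_t)$, and in particular is \emph{primitive}, i.e.\ orthogonal to $y^2$. Hence, under the Poincar\'e-duality identification, integration of $\omega(t)$ against $\tilde u(t)=c\,\mathrm{PD}(y^2)$ yields
\[
\int_{\tilde u(t)}\omega(t)=c\int_{Y_t}\omega(t)\wedge y^2=0\qquad (t\neq 0).
\]
On the other hand, as recalled at the start of Section 4, the function $t\mapsto\int_{\tilde u(t)}\omega(t)$ extends continuously (even holomorphically) to $t=0$ with value $\int_u\omega_F$. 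Taking the limit $t\to 0$ therefore forces $\int_u\omega_F=0$, so $(B2')$ fails as well.

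\textbf{Where the work lies.} The only non-formal point is the continuity/flatness used in both halves: that the intersection pairing and the value of the residue integral transport correctly under the displacement of $u\in H_4(Y_\reg)$ to nearby smooth fibers. For the intersection pairing this is just the topological triviality of $\Ycal\setminus Y_\sing\to\Delta$ near points of $Y_\reg$, and for the period integral it is exactly the continuity statement recorded in the paragraph preceding Lemma \ref{lemma:bp}. Once these are in hand, both implications reduce to the one-line computations above.
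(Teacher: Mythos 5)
Your proof is correct and is essentially the paper's argument read contrapositively: the paper deduces (B2) directly from the continuity of $t\mapsto\int_{u(t)}\omega(t)$ in the first case and from the observation that $3(u\cdot u)-\la y^2,u\ra^2$ is (up to a positive factor) the self-intersection of the primitive part of $u$ in the second, which are exactly the two facts your computation rests on. Your explicit appeal to the primitivity $[\omega(t)]\cdot y^2=0$ only makes visible a step the paper leaves implicit, so there is no substantive difference in approach.
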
 
\begin{proof}
In the first case, use the fact that for an equation for a deformation 
$\Ycal\subset \PP(V)\times\Delta$ of $Y$ (in $\Ocal_\Delta\otimes \sym^3V^*$) determines an extension of $\omega_F$ to a relative form on the part where $\Ycal$ is smooth over $\Delta$ such that its integral over the deformed $u$ is continuous on $\Delta$ and hence nonzero near $o\in\Delta$. In the second case  the expression in question  is precisely the self-intersection of the primitive part of $u$. If that is nonzero, then so is the primitive part of the deformed $u$.
\end{proof}

In the situation of Lemma \ref{lemma:bp} we distinguish cases  as follows.  

If $3(u\cdot u)-\la y^2,u\ra^2<0$, then $H\cap \Lambda_o$ has signature $(20,1)$ and hence  $\PP (H\otimes\CC)$ cannot meet  the closure if $\DD$. So this case will  not occur. 

(I) If $3(u\cdot u)-\la y^2,u\ra^2>0$, then $H\cap \Lambda_o$ has signature $(19,2)$,  $\PP (H\otimes\CC)$ meets $\DD$ in its interior and the resulting hyperplane section is two copies
of the symmetric domain of the orthogonal group of $H\otimes\RR$.

(II,III) Suppose now  $3(u\cdot u)-\la y^2,u\ra^2=0$ and $\int_u \omega_F\not= 0$.
The last condition ensures that $\tilde u\not=0$ so that $H\cap \Lambda_o$ is of corank
$1$ in $\Lambda_o$  and the first condition then says
that $H\cap \Lambda_o$ is degenerate with rank one dimensional nillattice.
If we denote the latter by $K$, then $\PP (H\otimes \CC)$
meets the boundary of $\DD$ in a set that can be identified with the set of rays in $(\Lambda_o/K)\otimes\RR$  on which the form is $\le 0$ (this consists of the closures
of two closed real hyperbolic disks of dimension $19$). 

(II) This is a subcase of the previous case and refers to the situation when there exist  $u_1,u_2\in H_4(Y_\reg)$ for which the
the intersection matrix on $y^2,u_1,u_2$ has rank one and for which 
$\int_{u_1}\omega_F$ and $\int_{u_2}\omega_F$ are $\RR$-independent. The same argument proves that $u_1$ and $u_2$ determine a primitive  isotropic lattice $K\subset\Lambda_o$ of rank two such that any limiting value of $P(t)$ lies in a
codimension two linear subspace $\PP((K^\perp\cap\Lambda_o)\otimes\CC)$. The latter meets the closure of $\DD$ in the two half spheres that make up 
$\PP (K\otimes\CC)-\PP (K\otimes\RR)$. These are both one-dimensional  boundary components of  $\DD$ (each in a different component of $\DD$).

\section{The Boundary Conditions for some semistable fourfolds}
In this section we verify the Boundary Conditions \ref{lemma:bp} in a number of 
instances. Inspection of the lists shows:

\begin{lemma}\label{lemma:b1list}
Any minimal strictly semistable cubic threefold satisfies the conditions of Lemma \ref{lemma:b1}.
\end{lemma}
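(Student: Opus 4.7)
My plan is to invoke the Yokoyama-Laza classification cited just above, which exhibits exactly two minimal strictly semistable orbits, and then verify Lemma \ref{lemma:b1} on a distinguished representative of each. So it suffices to check the condition for the secant variety of the Veronese surface (for $\Mcalint(I_0)$) and for the fourfold $Y_0 : Z_0Z_1Z_2 = Z_3Z_4Z_5$ (for $\Mcalint(III_0)$), since the condition is $\GL(V)$-invariant.

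For $\Mcalint(I_0)$, I would realize the secant as the determinantal cubic $\det M = 0$ with $M$ the generic symmetric $3\times 3$ matrix of linear forms on $\PP^5$; its singular locus is the rank-one stratum, which is the Veronese surface $V$ itself. Working in the affine chart around a chosen point of $V$, I would first straighten $V$ by an analytic change of variables (a parametric Morse lemma, here entirely explicit: subtract the Veronese parametrization from the three transverse coordinates). A direct expansion then shows the equation becomes $\tilde Z_1\tilde Z_2 - \tilde Z_5^2$ in coordinates $(\tilde Z_1,\tilde Z_2,s,t,\tilde Z_5)$, with $(s,t)$ parametrizing $V$. This is weighted homogeneous of degree $d=2$ with weights $(1,1,0,0,1)$, giving $2d = 4 > 3 = \sum w_i$.

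For $\Mcalint(III_0)$, a direct Jacobian computation shows the singular locus of $Y_0$ is the union of nine coordinate lines $\{Z_i=Z_j=Z_k=Z_l=0\}$ with $\{i,j\}\subset\{0,1,2\}$ and $\{k,l\}\subset\{3,4,5\}$, together with the six coordinate points where five of the $Z_\alpha$ vanish; each such point is the intersection of three of the nine lines. At a generic point of one of the lines, translating along the line and substituting $\tilde Z_k = Z_k\sqrt{1+Z_\ell}$ in the appropriate pair of variables absorbs the cubic perturbation and reduces the equation to $Z_1Z_2 - \tilde Z_4\tilde Z_5$ with one trivial parameter direction. This is precisely the $1$-dimensional transversal-$A_1$ case flagged in the remark after Lemma \ref{lemma:b1}, with weights $(1,1,0,1,1)$ and $d=2$, so $2d = 4 = \sum w_i$. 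At each of the six isolated singular points, dehomogenization yields either $Z_0Z_1Z_2 - Z_3Z_4$ or the symmetric $Z_1Z_2 - Z_3Z_4Z_5$; both are weighted homogeneous of degree $6$ (for instance with weights $(2,2,2,3,3)$ or $(3,3,2,2,2)$ respectively), again saturating the inequality at $2d = 12 = \sum w_i$.

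The main obstacle is less computational than organizational: one must trust the Yokoyama-Laza list, confirm that no subtler non-simple singularity lurks on the positive-dimensional singular loci, and invoke the (classical) parametric Morse lemma to bring the family of $A_1$ singularities into a clean product form. Once these are granted, the weight bookkeeping above suffices, and the enumeration is exhaustive because the Veronese surface is smooth with uniformly transversal $A_1$, and the nine-line configuration of $Y_0$ has only the six listed intersection points as loci of worse singularity.
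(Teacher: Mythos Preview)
There is a genuine gap, and it stems from a misreading of the Yokoyama--Laza statement. In GIT usage, a \emph{minimal} (equivalently: closed, polystable) strictly semistable orbit is one that represents a point of the quotient; so ``any minimal strictly semistable cubic fourfold'' means \emph{any} point of the GIT boundary $\Mcal-\Mcalint$. The Yokoyama--Laza theorem quoted in the paper does \emph{not} say there are exactly two such orbits; it says there are two minimal \emph{strata} among all the strata of $\Mcal-\Mcalint$, while the boundary itself has several positive-dimensional components (the curves $\Mcalint(3E_6)$, $\Mcalint(D_7\perp A_{11})$, and further strata in Laza's tables). The paper's one-line proof ``Inspection of the lists shows'' is a deferral to that \emph{full} list of minimal orbits, not to the two singletons alone. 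That this is the intended reading is confirmed downstream: in the proof of Lemma~\ref{lemma:stratasep} the author asserts that \emph{every} stratum $S$ of $\Mcal-\Mcalint$ satisfies (B1), and the only input available for this is Lemma~\ref{lemma:b1list}.

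Your computations for $I_0$ and $III_0$ are correct and in fact more explicit than anything the paper provides (the weights $(1,1,0,0,1)$ for the transversal $A_1$ along the Veronese, $(1,1,0,1,1)$ along the nine lines of $III_0$, and $(3,3,2,2,2)$ at the six coordinate vertices all check out). But to complete the argument you must run the same check across the remaining singularity types occurring in Laza's boundary list --- for instance the three $\tilde E_6$ points on a member of $\Mcalint(3E_6)$ (local equation $s_1s_2-\Phi(t_0,t_1,t_2)$, same weights $(3,3,2,2,2)$, degree $6$, equality again), the singularities on $\Mcalint(D_7\perp A_{11})$, and so on. The remark following Lemma~\ref{lemma:b1} already handles $\tilde E_6,\tilde E_7,\tilde E_8$ and one-dimensional transversal $A_1$, so most of the remaining cases are quick, but they do have to be enumerated; checking only the two most degenerate orbits does not logically propagate (B1) to the strata above them.
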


We now  turn to the two most important cases.

\subsection*{Type $I_0$ : the secant variety of a Veronese}
Fix a vector space $W$ of dimension  3 and consider the space 
$\sym^2W$. It has dimension 6 and parameterizes the quadratic forms on $W^*$.
The Veronese variety is the projectivization of the cone of
the elements of the pure squares in $\sym^2W$, and so the image of
$\PP(W)$ in $\PP(\sym^2W)$. The secant variety $Y$ of this variety consists of the elements that can be written as the sum of two squares, in other words is
the locus of  singular  quadratic expressions. Its singular part is the Veronese variety
(the locus of pure squares) and hence the smooth part $Y_\reg\subset Y$ is the locus
of  the quadratic expressions of exact rang $2$ (both are $\SL (W)$-orbits). 

\begin{lemma}
The  natural map $H_4(Y_\reg)\to H^4(Y)$ has a kernel  cyclic of order $2$ and image an infinite cyclic group  generated by a  distinguished element $a$. A lift $u\in H_4(Y_\reg)$ of $a$ satisfies $u\cdot u=3$. If $y\in H^2(Y)$ denotes the hyperplane class and if we identify
$H^8(Y)$ with $\ZZ$ by means of integration over the fundamental class, then $y^4=3$,
$a\cdot y^2=1$ and $3a-y^2=2h$ for some $h\in H^4(Y)$ (so that $h\cdot y^2=0$, $h^2=6$ and $2h+y^2$ is divisible by $3$).
\end{lemma}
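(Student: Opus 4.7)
The plan is to identify $Y$ with $\sym^2\PP(W)$ and exploit the induced degree-$2$ map
\[
\sigma:\PP(W)\times\PP(W)\to Y,\qquad ([\ell_1],[\ell_2])\mapsto[\ell_1\ell_2],
\]
whose image is the rank-$\le 2$ locus and which factors through the normalization isomorphism $\sym^2\PP(W)\xrightarrow{\sim}Y$. Writing $y_i$ for the hyperplane class of the $i$-th factor, we have $\sigma^\ast y = y_1+y_2$. The crucial $4$-cycle is the plane $\Pi_\ell := \ell\cdot\PP(W)\subset Y$ through $[\ell^2]\in V_2$ (for a fixed $[\ell]\in\PP(W)$); set $A:=[\Pi_\ell]\in H^4(Y;\ZZ)$. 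Its $\sigma$-preimage is the transverse union $(\{[\ell]\}\times\PP(W))\cup(\PP(W)\times\{[\ell]\})$ of two $\PP^2$'s meeting at $([\ell],[\ell])$, so $\sigma^\ast A = y_1^2+y_2^2$. Applying the degree-$2$ projection formula together with standard intersection theory on $\PP(W)^2$ immediately yields $y^4=3$, $A\cdot y^2=1$, and $A\cdot A=1$.

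Next, the image of $H_4(Y_\reg)\to H^4(Y)$ is identified as the kernel of the restriction $i^\ast:H^4(Y;\ZZ)\to H^4(V_2;\ZZ)=\ZZ$, using the long exact sequence of the pair $(Y,V_2)$ together with Poincar\'e duality on the smooth open $8$-manifold $Y_\reg$. Via the isomorphism $\sigma|_\Delta:\Delta\xrightarrow{\sim}V_2$, restriction gives $i^\ast(y^2) = (y_1+y_2)^2|_\Delta = 4$ and $i^\ast A = (y_1^2+y_2^2)|_\Delta = 2$ (the latter reflecting that $\Pi_\ell$ and $V_2$ share the common tangent plane $\ell\cdot W/\langle\ell^2\rangle$ at $[\ell^2]$, forcing local intersection multiplicity~$2$). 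Hence the kernel is primitively generated by $a := y^2-2A$, and a direct computation gives $a\cdot y^2 = 3-2 = 1$ and $u\cdot u = a\cdot a = 3-4+4 = 3$. Setting $h := y^2-3A$, one verifies $3a-y^2 = 2h$, $h\cdot y^2=0$, $h^2=6$, and $2h+y^2=3a$ is divisible by $3$.

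Finally, the $\ZZ/2$-kernel of $H_4(Y_\reg)\to H^4(Y)$ reflects the fact that $Y_\reg = (\PP(W)^2\setminus\Delta)/\Sigma_2$ has simply connected universal cover, so $\pi_1(Y_\reg)=\Sigma_2$. The Cartan--Leray spectral sequence for this free $\Sigma_2$-cover produces a $2$-torsion summand in $H_4(Y_\reg;\ZZ)$ which dies under the natural map to $H^4(Y;\ZZ)$, as the latter is torsion-free in the relevant range (verified through the resolution $\tilde Y = \mathrm{Hilb}^2\PP(W)\to Y$). The hard part will be the careful bookkeeping for the tangency-induced intersection multiplicity $i^\ast A=2$ and the precise identification of the $\ZZ/2$-torsion summand; once these are in hand, everything else reduces to routine intersection theory on $\PP^2\times\PP^2$ via the double cover.
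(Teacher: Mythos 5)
Your route is genuinely different from the paper's. Looijenga works on the resolution $\tilde Y=\PP(\sym^2\zeta)\to\check{\PP}(W)$ (your $\mathrm{Hilb}^2\PP(W)$), computes $H^*(\tilde Y)$ and $H^*(C)$ by Grothendieck's formula, and extracts everything from the restriction $y\mapsto 2x$; you instead use the $2{:}1$ cover $\sigma:\PP(W)^2\to\sym^2\PP(W)\cong Y$ and the projection formula. Your intersection numbers are correct and consistent with his: your $a=y^2-2A$ corresponds to his generator $y^2-2uy+4u^2$ (with $A$ corresponding to $uy-2u^2$), and the derived identities for $h$ all check. The identification of the image with $\ker\bigl(i^*:H^4(Y)\to H^4(V_2)\bigr)$ via the pair $(Y,V_2)$, computed through the isomorphism $\sigma|_\Delta$, is clean and is not how the paper proceeds.

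There are, however, three genuine gaps. First, $A:=[\Pi_\ell]\in H^4(Y;\ZZ)$ is not constructed: $Y$ is singular and $\Pi_\ell$ meets $Y_{\sing}$, so there is no cycle class map $H_4(Y)\to H^4(Y)$ to invoke; the clean fix is the transfer for the finite quotient $\sigma$, setting $A:=\sigma_!(y_1^2)$ so that $\sigma^*A=y_1^2+y_2^2$ by $\sigma^*\sigma_!=1+\mathrm{swap}^*$. Second, you compute $\ker i^*$ only on the sublattice $\ZZ y^2+\ZZ A$; to conclude that the image is exactly $\ZZ(y^2-2A)$ you must know $H^4(Y;\ZZ)=\ZZ y^2\oplus\ZZ A$ (rank two and torsion free), which is precisely what the paper's Leray argument for the contraction $\tilde Y\to Y$ supplies and which your sketch omits. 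Third, and most seriously, the $\ZZ/2$ kernel: your own exact sequence gives $\ker\bigl(H^4(Y,V_2)\to H^4(Y)\bigr)=\mathrm{im}\,H^3(V_2)=0$ since $V_2\cong\PP^2$, i.e.\ \emph{injectivity}; and the Cartan--Leray route does not rescue this, because $H_2$ and $H_4$ of $\PP(W)^2-\Delta\simeq \mathrm{Fl}(W)$ are \emph{free} $\ZZ[\Sigma_2]$-modules, so $E^2_{2,2}=H_2(\Sigma_2;\ZZ[\Sigma_2])=0$ and the spectral sequence yields $H_4(Y_\reg)\cong\ZZ$ with no $2$-torsion. So the part you defer as "careful bookkeeping" is not bookkeeping: in your setup the kernel comes out trivial, which is in direct tension with the stated lemma (and, one should note, with the paper's own exact sequence, whose connecting term should be $H^3(C)=0$ rather than $H^2(C)$). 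Since only the image and the intersection numbers are used later in the paper, this does not damage the application, but as a proof of the lemma as stated the torsion claim is unresolved.
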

\begin{proof}
The map that assigns to $q\in\sym^2W$ the minimal subspace $W_q\subset W$ 
such that $q\in\sym^2W_q$  gives us a fiber bundle 
$Y\to\check{\PP}(W)$:  The fiber over $[W']\in  \check{\PP}(W)$ is the
space of nonsingular elements  in $\PP(\sym^2 W')$, that is the complement of the 
conic $C(W')$  that is the image of $\PP(W')\to \PP(\sym^2 W')$.
Denote by
\[
\pi: \tilde Y\to \check{\PP}(W)
\] 
the projective plane bundle over whose fiber over 
$[W']\in \check{\PP}(W)$ is $\PP(\sym^2 W')$ and  denote by $C\subset \tilde Y$ the
corresponding family of conics. The evident map $\tilde Y\to Y$ is a resolution of singularities with $C$ as exceptional divisor. The contraction of $C$ to  the Veronese variety defines a $\PP^1$-bundle $C\to \check{\PP}(W)$. 

If $\zeta$ denotes the tautological plane bundle over $\check{\PP}(W)$, then
$C\to \check{\PP}(W)$ resp.\  $\tilde Y$ is the projectivization of the vector bundle 
$\zeta$ resp.\ $\sym^2\zeta$ over  $\check{\PP}(W)$. 
Denote by $u\in H^2(\check{\PP}(W),\ZZ)$ the positive generator, that
is the first Chern class of the line bundle $\Ocal_{\check{\PP}(W)}(1)$.
We have an exact sequences 
\begin{align*}
0\to \zeta\to &\Ocal_{\check{\PP}(W)}\otimes W\to \Ocal_{\check{\PP}(W)}(1)\to 0,\\
0\to \sym^2\zeta\to &\Ocal_{\check{\PP}(W)}\otimes\sym^2W\to 
\Ocal_{\check{\PP}(W)}(1)\otimes W\to 0,
\end{align*}
and hence 
\begin{align*}
c(\zeta)&=(1+u)^{-1}=1-u+u^2,\\
c(\sym^2\zeta)&=(1+u)^{-3}= 1-3u+6u^2. 
\end{align*}
If $x\in H^2(\tilde Y)$ resp.\ $y\in H^2(\tilde Y)$ denotes the first Chern class of 
$\Ocal_{C}(1)$ resp.\ $\Ocal_{\tilde Y}(1)$, then  by a formula of Grothendieck we have that
\begin{align*}
H^\pt(C)&=H^\pt( \check{\PP}(W))[x]/(x^2-ux+u^2),\\
H^\pt(\tilde Y)&=H^\pt( \check{\PP}(W))[y]/(y^3-3uy^2+6u^2y).
\end{align*}
The restriction map $H^\pt(\tilde Y)\to H^\pt(C)$ is $H^\pt( \check{\PP}(W))$-linear,
but as the restriction of $\Ocal_{\tilde Y}(1)$ to $C$ is $\Ocal_{C}(2)$ it will
send $y$ to $2x$ (notice that it is indeed true that  
$(2x)^3-3u(2x)^2+6u^2(2x)=-12u(x^2-ux+u^2)$).
If we feed this into the exact sequence
\begin{multline*}
\cdots\to H^2(\tilde Y)\to H^2(C)\to H^4_c(Y_\reg)\to H^4(\tilde Y)\to H^4(C)\to\cdots  ,
\end{multline*}
then we see that the middle term  is the direct sum of a copy of $\ZZ/2$ and an infinite cyclic group generated by a  class $u$ that maps to $y^2-2uy+4u^2$. So
\[
u^2=(y^2-2uy+4u^2)^2=y^4-4uy^3+12 u^2y^2
\]
(we used that $u^3=0$). In order to compute this, we observe that
$H^8(\tilde Y)$ is generated by $y^4, uy^3, u^2y^2$ and that we have 
$uy^3=u(3uy^2-6u^2y)=3u^2y^2$ and $y^4=y(3uy^2-6u^2y)=3uy^3-6u^2y^2=3u^2y^2$.
So $u^2y^2$ is the orientation class and so  
\[
a^2= 3u^2y^2-4.3u^2y^2+12 u^2y^2=3u^2y^2.
\]
So if we regard $u$ as an element of $H_4(Y_\reg)$, then we see that its self-intersection is $3$. The pull back of $\Ocal_{\PP(\sym^2W)}(1)$ along the map 
map $\tilde Y\to Y\subset \PP(\sym^2W)$  is $\Ocal_{\tilde Y}(1)$. So 
the hyperplane class of $\PP(\sym^2W)$ is mapped to $y$ and
$a\cdot y^2=(y^2-2uy+4u^2)\cdot y^2=3-6+4=1$
(and from $y^4=3$ we see that $Y$ is indeed of degree $3$). 
Now consider the projection map $\tilde Y\to Y$. It contracts $C$ along the projection
$C\to \check{\PP}(W)$ (a $\PP^1$-bundle). This implies that we have a long exact sequence
\[
\dots\to H^1(\check{\PP}(W))\to H^4(Y)\to H^4(\tilde Y)\to H^2(\check{\PP}(W))\cdots
\]
Since $H^1(\check{\PP}(W))=0$ and $H^2(\check{\PP}(W))$ is torsion free, the  map 
$H^4(Y)\to H^4(\tilde Y)$ must be a primitive embedding. Therefore, for the assertions
to be proved, we may replace $Y$ by $\tilde Y$. It is clear that the element
$h:=y^2-3uy+6y^2$ which evidently satisfies $2h=3(y^2-2uy+4u^2)-y^2$ possesses also the other desired properties. 
\end{proof}

This lemma implies that we are dealing with a  type (I) situation for which the hyperplane is special:

\begin{corollary}\label{cor:special}
This $Y$ and $u$ satisy the hypotheses of Lemma \ref{lemma:bp} with  the projective hyperplane
perpendicular to a special vector.
\end{corollary}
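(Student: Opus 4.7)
The plan is to verify the two conditions of Lemma \ref{lemma:bp} for this $Y$ and the class $u$ constructed in the preceding lemma, and then identify the resulting hyperplane as the one orthogonal to a special vector.

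For condition (B1), I would appeal to Lemma \ref{lemma:b1list}: the secant variety of the Veronese represents the stratum $\Mcalint(I_0)$ and is in particular a minimal strictly semistable cubic fourfold, so the hypotheses of Lemma \ref{lemma:b1} are met and (B1) follows. (A direct check would use the transversal singularity of $Y$ along the Veronese surface, but invoking the list is cleaner.)

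For condition (B2), I would use the sufficient criterion ($B2''$). The preceding lemma furnishes the values $u\cdot u=3$ and $\langle y^2,u\rangle = 1$, so
\[
3(u\cdot u) - \langle y^2,u\rangle^2 = 9 - 1 = 8 \neq 0,
\]
which not only gives (B2) but, being positive, places us in case (I) of the dichotomy stated just after Lemma \ref{lemma:b1}: the hyperplane $H\cap \Lambda_o$ has signature $(19,2)$.

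It remains to identify the hyperplane as $\DD_h$ for a special $h$. After marking, the linear form on $\Lambda_o$ furnished by $u$ is the restriction to $\Lambda_o$ of $a\in H^4(Y)\cong \Lambda$; its kernel is the orthogonal complement of the primitive projection $a_o := a - \tfrac{1}{3}\eta$. The identity $3a - y^2 = 2h$ from the preceding lemma then yields $a_o = \tfrac{2}{3}h$, so the hyperplane is exactly $h^\perp \cap \Lambda_o$. To see that this $h$ is special in the sense of Lemma 1.1, note that $h\cdot y^2 = 0$ puts $h\in\Lambda_o$, that $h\cdot h = 6$ is given, and that $2(\eta - h) = 3\eta - 3a \in 3\Lambda$ combined with the invertibility of $2$ modulo $3$ yields $\eta - h \in 3\Lambda$; Lemma 1.1 then identifies $h$ as special. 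The only step that requires care is pinning down that the linear form is proportional to $h$ rather than merely lying in its span, and this is immediate from the formula $3a-y^2 = 2h$, so no genuine obstacle arises.
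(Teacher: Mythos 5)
Your proof is correct and follows essentially the same route as the paper's: condition (B2) is checked via $(B2'')$ with $3(u\cdot u)-\la y^2,u\ra^2=9-1=8$, placing the situation in case (I), and the relation $3a-y^2=2h$ is used to produce the divisibility $\eta-\tilde h=3(\tilde u-\tilde h)$ identifying $\tilde h$ as a special vector. The only cosmetic difference is that the paper verifies (B1) by a direct calculation near the double-point locus of $Y$ rather than by citing Lemma \ref{lemma:b1list}; both are legitimate.
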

\begin{proof}
A calculation near the double point locus of $Y$ shows
that $\omega_F$ is not square integrable and we have $3(u\cdot u)-\la y^2,u\ra^2=3.3-1=8$, so that we are in case $I$.
The smoothing  and the marking determine $\tilde u\in\Lambda$ and $\tilde h\in\Lambda_o$ satisfying $\tilde h\cdot \tilde h=6$ and 
$\eta -\tilde h=3(\tilde u-\tilde h)$.  Hence  $\tilde h$ is a special vector in $\Lambda_o$ and
the limiting values of the period on $S$ takes values in the special hyperplane
in $\PP(\Lambda_o\otimes\CC)$ perpendicular to it.
\end{proof}

\subsection*{Type $III_0$}
Write $V$ as  a direct sum of  subspaces of dimension 3 and denote its coordinates accordingly:
$(S_0,S_1,S_2,T_0,T_1,T_2)$. We consider the fourfold  $Y\subset \PP^5$ 
defined by the equation $F(S,T):=S_0S_1S_2-T_0T_1T_2$.
This is in fact a toric variety with the torus in question acting diagonally. The singular locus
$Y_\sing$ of $Y$ is the union of lines connecting a coordinate vertex in the plane $\PP(V')$ with coordinate vertex in  the plane $\PP(V'')$. The  evident projection of $Y$ onto $\PP(V')\times\PP(V'')$, is a morphism away from the union coordinate lines in the two planes  $\PP(V')$ and $\PP(V'')$.
We use the standard coordinates $(s_1,s_2,t_0,t_1,t_2)$ on the affine open subset 
$U(S_0)\subset \PP(V)$ defined by $S_0\not=0$. So $Y\cap U(S_0)$ is given by putting 
$f(s,t)=s_1s_2-t_0t_1t_2$ equal to zero. Under the morphism
\[
A:\CC^3\to Y, \quad (u,v,w)\mapsto(u,vw,v,w,u)
\]
the preimage of $Y_\sing$ is given by the union of the planes $u=v=0$ and $u=w=0$.
So $A$ maps the $\Sigma\subset \CC^3$ defined by $|u|^2+|v|^2=2$, $|v|=|w|$ to $Y_\reg$.
Denote by $\Sigma_-$ resp.\ $\Sigma_+$ the locus where $|u|\le 1$  resp.\ $|u|\ge 1$.
Then $(u,v,w)\in \Sigma_-\mapsto (u, v/|v|, w/|w|)$ identifies $\Sigma_-$ with 
a closed $2$-disk times a $2$-torus and $(u,v,w)\in \Sigma_+\mapsto (u/|u|, v, w)$
identifies $\Sigma_+$ with a circle times the cone over a $2$-torus. This also shows that
$A(\Sigma_-)$ resp.\ $A(\Sigma_+)$is contained in $U(T_0T_1)$ resp.\ $U(S_0S_1)$. 

The common boundary
$\Sigma_0:=\Sigma_-\cap\Sigma_+$ is a $3$-torus. We orient $\Sigma_0$ by means of the
identification with the standard $3$-torus and let $\Sigma_{\pm}$ be oriented such that
they induce the given orientation. Then $[\Sigma]:=[\Sigma_+]-[\Sigma_-]$ is a $4$-cycle.
We denote by $a\in H_4(Y)$ the class of $A_*[\Sigma]$. 

\begin{lemma}
We have $a\cdot a=0$ and if $y\in H^2(Y)$ is the hyperplane class, then $y^2\cdot a=0$.
Moreover, the value of $[\omega_F]\in H^4(Y_\reg,\CC)$ on $a$ is nonzero.
\end{lemma}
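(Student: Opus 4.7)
\emph{Overall plan.} The three assertions of the lemma are handled separately. The first two are short geometric arguments using the torus action on $Y$ and the containment of $A(\Sigma)$ in a single affine chart. The third is a tube/residue calculation based on the Griffiths formula $[\omega_F](a)=\int_{T_a}\tilde\omega_F$, carried out by pulling back through an extension of the parametrization $A$ and localizing to the 3-torus $\Sigma_0$.

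\emph{First two parts.} For $a\cdot a=0$: each $(\lambda,\mu,\nu)\in(\CC^\times)^3$ defines a linear automorphism $\phi$ of $\PP^5$ sending $(S_0,S_1,S_2,T_0,T_1,T_2)$ to $(S_0,\nu S_1,\lambda\mu S_2,\lambda T_0,\mu T_1,\nu T_2)$; this scales $F$ by $\lambda\mu\nu$ and hence preserves $Y$, and through $A$ it covers the scaling $(u,v,w)\mapsto(\nu u,\lambda v,\mu w)$ on $\CC^3$. Writing down the defining equations of $\Sigma$ and of its rescaled copy $\Sigma'$ and subtracting yields $(1-|\nu|^{-2})|u|^2+(1-|\lambda|^{-2})|v|^2=0$ together with $|v|(|\lambda|^{-1}-|\mu|^{-1})=0$; if we pick $|\lambda|\neq|\mu|$ and $|\nu|\neq 1$ with $(\lambda,\mu,\nu)$ close to $(1,1,1)$, these force $|v|=0$, $|u|^2=2$ and $|\nu|=1$, a contradiction. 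Hence $A(\Sigma)$ and $\phi(A(\Sigma))$ are disjoint, and since $\phi$ is path-connected to the identity in $\aut(Y)$ the two cycles are homologous, giving $a\cdot a=0$. For $y^2\cdot a=0$: since $A(\Sigma)\subset U(S_0)$ and $\Ocal_{\PP^5}(1)|_{U(S_0)\cap Y}$ is trivialized by the section $S_0$, we have $A^*y=0$ in $H^2(\Sigma)$, and therefore $\int_a y^2=\int_\Sigma (A^*y)^2=0$.

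\emph{Third part.} Extend $A$ to a local isomorphism $\tilde A:\CC^5\to U(S_0)$ by $(u,v,w,\eta_1,\eta_2)\mapsto(1:u:vw+\eta_1:v:w:u+\eta_2)$; one computes $\tilde A^*F=u\eta_1-vw\eta_2$ and
\[
\tilde A^*\tilde\omega_F=\frac{du\wedge dv\wedge dw\wedge d\eta_1\wedge d\eta_2}{(u\eta_1-vw\eta_2)^2}.
\]
The tube $T_a$ pulls back to an $S^1$-bundle over $\Sigma$ around the hypersurface $\{u\eta_1=vw\eta_2\}$ in $\CC^5$. Writing the numerator as $-dF\wedge\beta$ with $\beta=\frac{1}{vw}\,du\wedge dv\wedge dw\wedge d\eta_1$ (valid where $vw\neq 0$) and using the identity $dF\wedge\beta/F^2=d\beta/F-d(\beta/F)$ together with $d\beta=0$, the form $\tilde\omega_F$ becomes exact on $\PP^5\setminus(Y\cup\{vw=0\})$. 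Consequently, by Stokes with poles the tube integral localizes to the loci where $T_a$ crosses $\{v=0\},\{w=0\}$; by symmetry an analogous rewriting produces the contribution from $\{u=0\}$. Iterating the Leray residues at these loci reduces the computation to the toric integral $\oint du/u\wedge\oint dv/v\wedge\oint dw/w=(2\pi i)^3$ on the 3-torus $\Sigma_0=\{|u|=|v|=|w|=1\}$ that separates $\Sigma_+$ from $\Sigma_-$, and the orientation convention $[\Sigma]=[\Sigma_+]-[\Sigma_-]$ ensures that the contributions from the two halves combine additively into a nonzero multiple of $(2\pi i)^3$.

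\emph{Main obstacle.} The principal difficulty is the third part: because $A$ satisfies the coincidence $s_1=t_2=u$, the direct pullback $A^*\tilde\omega_F$ is degenerate, and the tube must be set up via the five-parameter extension $\tilde A$. After that setup, the reduction of the second-order pole to a simple pole and the localization of the integral onto the 3-torus $\Sigma_0$ are standard toric residue computations, but the sign and orientation bookkeeping on the pieces of $[\Sigma_+]-[\Sigma_-]$ must be tracked carefully to see the non-cancellation that yields a nonzero period.
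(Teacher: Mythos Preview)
Your arguments for the first two assertions are correct and essentially match the paper's: for $a\cdot a=0$ the paper simply replaces the radius $2$ in the definition of $\Sigma$ by $1$ (a special case of your torus translate), and for $y^2\cdot a=0$ both arguments rest on $A(\Sigma)\subset U(S_0)$.

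For the third part, your setup via the change of variables $\tilde A$ is sound --- in fact $\tilde A$ is a global polynomial isomorphism of $\CC^5$ onto $U(S_0)$, and your primitive $\beta/F$ is exactly (up to sign) the pullback of the paper's form $\omega_{T_0T_1}$. But the localization step as you describe it does not hold together. Saying that the tube integral ``localizes to where $T_a$ crosses $\{v=0\},\{w=0\}$'' and then ``by symmetry'' also to $\{u=0\}$ conflates two incompatible pictures: a single application of Stokes with the primitive $\beta/F$ already accounts for the \emph{entire} integral (it pushes everything to a residue near the cone point $v=w=0$ of $\Sigma_+$), so there is no separate ``contribution from $\{u=0\}$'' to add. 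And evaluating that residue directly at the singular circle of $\Sigma_+$ is not a routine toric computation; it is exactly the step you are trying to avoid.

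What actually makes the reduction to $\Sigma_0$ work is the decomposition $[\Sigma]=[\Sigma_+]-[\Sigma_-]$ together with \emph{two different} primitives, one for each half. Use your $\beta/F$ (pole along $vw=0$) on the tube over $\Sigma_-$, where $v,w\neq 0$; use the analogous primitive with pole along $u=0$ (in the paper's notation, $\omega_{S_0S_1}$) on the tube over $\Sigma_+$, where $u\neq 0$. Stokes on each piece yields a boundary term on the common $3$-torus $\Sigma_0$, and it is the \emph{difference} of the two primitives whose residue on $Y$ is $\tfrac{ds_1}{s_1}\wedge\tfrac{dt_0}{t_0}\wedge\tfrac{dt_1}{t_1}$, pulling back to $\tfrac{du}{u}\wedge\tfrac{dv}{v}\wedge\tfrac{dw}{w}$ on $\Sigma_0$. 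This is the paper's \v{C}ech-cocycle argument. Once you organize it this way there is only one residue step (over the tube fiber), not an ``iteration'', and the orientation convention $[\Sigma]=[\Sigma_+]-[\Sigma_-]$ makes the two boundary contributions combine rather than cancel.
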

\begin{proof}
The locus $\Sigma'\subset \CC^3$ defined by $|u|^2+|v|^2=1$, $|v|=|w|$ defines 
a cycle $[\Sigma']$ homologous to $[\Sigma]$ in the preimage of $Y_\reg$. Since
$A(\Sigma)$ and $A(\Sigma')$ are disjoint, it follows that $a\cdot a=0$. 
Since $A$ takes values in an affine piece, we also have that $a\cdot y^2=0$.

In order to prove the last assertion,  we identify, following Griffiths' receipe, the  cohomology class on $Y_\reg$ defined by the double residue of $F^{-2}dS_0\wedge\cdots \wedge dT_2$ as a Cech-cocycle. The corresponding rational $5$-form $\tilde\omega_F$ on $\PP(V)$ is on  standard affine open piece  $U(S_0)$ given by  
\[
\tilde\omega_F=\frac{ds_1\wedge ds_2\wedge dt_0\wedge dt_1\wedge dt_2}{ f(s,t)^2},
\]
We write this as the exterior derivative of a $4$-form in two ways: $\tilde\omega_F=d\omega_{S_0S_1}$, where
\[
\omega_{S_0S_1}:=\Big(\frac{ds_1\wedge dt_0\wedge dt_1\wedge dt_2}{s_1f(s,t)} \Big).
\]
This is a form defined on $U(S_0S_1F)$.  We also have $\tilde\omega_F=d\omega_{T_0T_1}$, where
\[
\omega_{T_0T_1}:=\Big(-\frac{ds_1\wedge ds_2\wedge dt_0\wedge dt_1}{t_0t_1f(s,t)} \Big).
\]
is a form defined on $U(S_0T_0T_1F)$. Notice that the difference
\[
\omega_{T_0T_1}-\omega_{S_0S_1}=\frac{df}{f} \wedge 
\frac{ds_1\wedge dt_0\wedge dt_1}{s_1t_0t_1}
\]
has as residue on $Y_\reg$ the restriction $(s_1t_0t_1)^{-1}ds_1\wedge dt_0\wedge dt_1$ to $Y_\reg$.
Let $B\to Y_\reg$ be a tubular neighborhood of $Y_\reg$ in $\PP^5$ so that
$\partial B\to  Y_\reg$ is a circle bundle with total space contained in $\PP^5-Y$.
An application of Stokes' theorem yields
\begin{align*}
[\omega_F](a)&=\int _{\partial B | A_*[\Sigma]} \tilde\omega_F\\
&=\int _{\partial B | A_*[\Sigma_+]} d\omega_{T_0T_1}-\int _{\partial B | A_*[\Sigma_-]} d\omega_{S_0S_1}\\
&=\int_{\partial B | A_*[\Sigma_0]} \left(\omega_{T_0T_1}-\omega_{S_0S_1}\right) \text{by Stokes' theorem}\\
&=\int_{\partial B | A_*[\Sigma_0]}\frac{df}{f} \wedge \frac{ds_1\wedge dt_0\wedge dt_1}{s_1t_0t_1}
\end{align*}
and the residue theorem shows that the latter integral equals
\[
\int_{A_*[\Sigma_0]}\frac{ds_1\wedge dt_0\wedge dt_1}{s_1t_0t_1}=
\int_{|u|1} \frac{du}{u}\int_{|v|=1} \frac{dv}{v}\int_{|w|=1} \frac{dw}{w}=(2\pi \sqrt{-1})^3.
\]
\end{proof}

\begin{corollary}
This  $Y$ is a   boundary case of type $II$ or $III$.
\end{corollary}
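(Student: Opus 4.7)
The plan is to apply Lemma \ref{lemma:bp} with the class $u=a\in H_4(Y_\reg)$ constructed above, and then read off the case distinction made immediately after Lemma \ref{lemma:bp2} from the numerical data just computed.

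First I would verify the boundary condition (B1). The singular locus of $Y$ is the union of nine lines joining a coordinate vertex of $\PP(V')$ to one of $\PP(V'')$; away from the six coordinate vertices the transverse singularity is $A_1$ (local equation $s_1s_2-t_0t_1=0$ in four variables), so the interesting points are the vertices themselves. In local affine coordinates on $U(S_0)$ the vertex $(1\!:\!0\!:\!\cdots\!:\!0)$ has local equation $s_1s_2-t_0t_1t_2$, which is weighted homogeneous of degree $6$ with weights $(3,3,2,2,2)$. Since $2\cdot 6=12=3+3+2+2+2$, Lemma \ref{lemma:b1} applies (and in fact this is a special case of Lemma \ref{lemma:b1list}), so (B1) is satisfied.

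Next I would verify (B2) via the stronger condition (B2$'$). The preceding lemma computed $[\omega_F](a)=(2\pi\sqrt{-1})^3\neq 0$, so (B2$'$) holds for $u=a$. Both boundary conditions of Lemma \ref{lemma:bp} are therefore in force, and any limit of period points along a one-parameter smoothing lies in the projective hyperplane defined by the linear form that $a$ induces on $\Lambda_o$.

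Finally I would invoke the trichotomy set up after Lemma \ref{lemma:bp2}. From $a\cdot a=0$ and $y^2\cdot a=0$ we obtain
\[
3(a\cdot a)-\langle y^2,a\rangle^2=0,
\]
so the hyperplane $H$ in $\Lambda_o$ cut out by $\tilde a$ is of corank one and degenerate. Combined with $\int_a\omega_F\neq 0$, which guarantees $\tilde a\neq 0$ in $\Lambda_o^*$, this places $Y$ precisely in the degenerate case labelled (II,III) in Section 4, as required. The only substantive step is the verification of (B1) at the vertices, and that is settled by exhibiting the weights $(3,3,2,2,2)$; everything else is just a repackaging of the numerical content of the preceding lemma.
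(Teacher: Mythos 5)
Your proposal is correct and follows exactly the route the paper intends: the paper states this corollary without proof, as an immediate consequence of the preceding lemma ($a\cdot a=0$, $y^2\cdot a=0$, $[\omega_F](a)\neq 0$) together with the (II,III) case of the trichotomy in Section 4, with condition (B1) delegated to Lemma \ref{lemma:b1list}. Your explicit verification of (B1) at the coordinate vertices via the weights $(3,3,2,2,2)$ (and transversal $A_1$ along the lines) is a correct filling-in of that delegated step.
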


\section{Proof of the main theorem}

The period maps defines a morphism $P: \Mcalint\to X$. It defines rational maps
$G_P:\Mcal\dashrightarrow X^\bb$ and $\Gint_P: \Mcal\dashrightarrow \Xint^\bb$.

\begin{lemma}\label{lemma:stratasep}
The map $\Gint_P$ sends the boundary $\Mcal-\Mcalint$ to the boundary $\Xint^\bb-\Xint$. Moreover, the preimage of $\Xint(3E_6)\cup \Xint(D_7\perp A_{11})$ is $\Mcalint(3E_6)\cup\Mcalint(D_7\perp A_{11})$.
\end{lemma}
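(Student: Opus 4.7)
The plan is to combine the boundary-condition analyses of Section~4 with the Yokoyama–Laza stratification of $\Mcal-\Mcalint$ and the incidence diagram of $\Xint^\bb-\Xint$ described in Section~2.

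For the first statement, fix $Y\in \Mcal-\Mcalint$. Since $Y$ has a singularity not of type $A$, $D$, or $E$, Lemma~\ref{lemma:b1list} (via Lemma~\ref{lemma:b1}) gives that $Y$ satisfies (B1). In particular $[\omega_F]$ is nonzero in $H^4(Y_\reg,\CC)$, so we can pick $u\in H_4(Y_\reg)$ with $\int_u\omega_F\ne 0$; this is ($B2'$) and hence (B2). Lemma~\ref{lemma:bp} applied to any smoothing $\Ycal/\Delta$ of $Y$ then shows that the period accumulates on the projective hyperplane $u^\perp\subset \PP(\Lambda_o\otimes\CC)$. Consulting the trichotomy at the end of Section~4 (driven by the sign of $3(u\cdot u)-\la y^2,u\ra^2$): in case (I) the hyperplane is special, so its projection to $X$ lies in $X_\Hcal$ and is contracted to a type-I stratum of $\Xint^\bb$; in cases (II), (III) the limit lies on the Baily–Borel boundary $X^\bb-X$, hence in a type-II or -III stratum of $\Xint^\bb$. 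In every case $\Gint_P(Y)\in \Xint^\bb-\Xint$.

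For the preimage identification, the starting observation is Corollary~\ref{cor:special}, which places $\Gint_P(\Mcalint(I_0))=\Xint(I_0)$. The Yokoyama–Laza theorem tells us that every GIT stratum of $\Mcal-\Mcalint$ except $\Mcalint(3E_6)$ and $\Mcalint(D_7\perp A_{11})$ contains $\Mcalint(I_0)$ in its closure, while these two distinguished strata have only $\Mcalint(III_0)$ as extra boundary. On the period side, the incidence diagram of Section~2 shows that the only boundary strata of $\Xint^\bb$ whose closures avoid $\Xint(I_0)$ are $\Xint(I_0)$ itself, $\Xint(III_0)$, $\Xint(3E_6)$ and $\Xint(D_7\perp A_{11})$. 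Granting that $\Gint_P$ sends each GIT stratum $S$ into a single boundary stratum $T$ of $\Xint^\bb$, we can conclude: for $S=\Mcalint(3E_6)$ or $\Mcalint(D_7\perp A_{11})$, the closure of $\Gint_P(S)$ must not contain $\Xint(I_0)$, and the local-embedding property of the period map forces the image of a curve to be a curve, ruling out the singletons $\Xint(I_0)$ and $\Xint(III_0)$; so $T\subseteq \Xint(3E_6)\cup \Xint(D_7\perp A_{11})$. Conversely, if $Y$ lies in some other stratum $S$, then $\overline{\Gint_P(S)}\ni \Xint(I_0)$, which is impossible if $\Gint_P(Y)\in \Xint(3E_6)\cup \Xint(D_7\perp A_{11})$ since the closures of those curves miss $\Xint(I_0)$. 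This forces $Y\in \Mcalint(3E_6)\cup \Mcalint(D_7\perp A_{11})$.

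The main obstacle is the stratum-preservation statement used above: that $\Gint_P$ is actually defined on all of $\Mcal-\Mcalint$ and that each GIT stratum lands in a single boundary stratum of $\Xint^\bb$. The first part needs Lemma~\ref{lemma:bp2}, which ensures the hyperplane constraint extracted from (B1) and (B2) is locally constant as the fiber varies in a family, so that the limit point in $\Xint^\bb$ depends continuously on $Y$ within a stratum. The second part is a consequence of the fact that the limiting mixed Hodge structure (equivalently, the position in the boundary stratification of $\Xint^\bb$) is determined by the equisingular type of $Y$; this can be made precise by passing to a simultaneous resolution of the smoothing family (where the Griffiths-type extension theorem yields a morphism to $\Xint^\bb$) and then invoking the algebraicity of both the GIT and Baily–Borel-type stratifications.
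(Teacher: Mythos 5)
Your overall architecture (boundary conditions from Section~4 plus the two stratification pictures) is the right one, but the way you verify the Boundary Conditions for an arbitrary boundary point contains a genuine gap, and it is precisely the gap that the paper's proof is organized to avoid. You argue: (B1) holds by Lemma~\ref{lemma:b1list}, ``in particular $[\omega_F]$ is nonzero in $H^4(Y_\reg,\CC)$, so we can pick $u$ with $\int_u\omega_F\neq 0$.'' Non-square-integrability of a closed form on the non-compact manifold $Y_\reg$ does not imply that its cohomology class is nonzero (closed forms on non-compact spaces can be exact and still have divergent $L^2$ norm), so (B2$'$) does not follow from (B1). The paper never makes this inference: it verifies (B2) only at the two \emph{minimal} strata, by hand --- via (B2$''$) and the homology computation for the secant variety of the Veronese ($u\cdot u=3$, $u\cdot y^2=1$), and via the explicit $(2\pi\sqrt{-1})^3$ period computation for $S_0S_1S_2=T_0T_1T_2$ --- and then uses Lemma~\ref{lemma:bp2} together with the Yokoyama--Laza fact that every stratum has $\Mcalint(I_0)$ or $\Mcalint(III_0)$ in its closure to propagate (B2) \emph{upward} to all other strata. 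Your appeal to Lemma~\ref{lemma:bp2} at the end, as a ``continuity of the limit point'' statement, inverts this logic and leaves the base case unestablished.

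A second, related gap is your treatment of case (I): you assert that ``in case (I) the hyperplane is special, so its projection to $X$ lies in $X_\Hcal$.'' A type-(I) constraint by itself only says the limit lies on \emph{some} hyperplane of signature $(19,2)$ meeting $\DD$; if that hyperplane is not one of the $\DD_h$ with $h$ special, the limit point can perfectly well lie in the interior $\Xint$, and the first assertion of the lemma fails. Specialness is a nontrivial arithmetic fact proved only for the Veronese secant variety (Corollary~\ref{cor:special}, using $3u-y^2=2h$ with $h$ special), and it reaches the other strata only through the ``same linear constraint'' clause of Lemma~\ref{lemma:bp2} applied to strata $\ge\Mcalint(I_0)$. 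Finally, your closing paragraph invokes machinery (each GIT stratum landing in a single stratum of $\Xint^\bb$, limiting mixed Hodge structures, simultaneous resolution) that is neither available at this point in the paper nor needed: the paper's second step only requires the two containments ($S\ge\Mcalint(I_0)\Rightarrow G_P(S)\subset\overline{X_\Hcal}$ and $S\ge\Mcalint(III_0)\Rightarrow G_P(S)\subset X^\bb-X$) together with the observation that $X(3E_6)$ and $X(D_7\perp A_{11})$ are the only Baily--Borel strata disjoint from $\overline{X_\Hcal}$, plus irreducibility.
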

\begin{proof}
We established that the singleton strata $\Mcalint (I_0)$ and $\Mcalint(III_0)$ satisfy the Boundary Conditions 
of Lemma \ref{lemma:bp} with $\Mcalint(I_0)$ of type (I) and $\Mcalint(III_0)$ of type 
(II-III). Every stratum $S$ of $\Mcal-\Mcalint$ satisfies the nonsquare integrability condition  (B1) and has at least one of these singletons in its closure. By Lemma \ref{lemma:bp2} it then also satisfies (B2). In fact, that lemma tells us that 
if $S\ge \Mcalint(III_0)$, then $S$  is of type (II-III) and  will be mapped by $G_P$ to the boundary  $X^\bb-X$. And if  $S\ge \Mcalint(I_0)$, then the lemma implies (in combination with
Corollary \ref{cor:special}) that $G_P$ maps $S$ to $\overline{X_\Hcal}$. In particular,
$\Gint_P$ maps  $\Mcal-\Mcalint$ to $\Xint^\bb-\Xint$.  

Now $\Mcalint(3E_6)$ and $\Mcalint(D_7\perp A_{11})$ are the only strata of $\Mcal-\Mcalint$ that are not $\ge \Mcalint(I_0)$. Likewise $X(3E_6)$ and $X(D_7\perp A_{11})$ are the only strata
of $X^\bb-X$ not  meeting $\overline{X_\Hcal}$. These strata are irreducible. So
the preimage of the union of the latter two is the union of the former two.
\end{proof}

The following proposition will imply Voisin's injectivity of the period map.

\begin{proposition}[Torelli property near a boundary component]\label{prop:degone}
The map $G_P$ maps $\Mcalint(3E_6)$ to $\Xint(3E_6)$ and is a local isomorphism along $\Mcalint(3E_6)$.
\end{proposition}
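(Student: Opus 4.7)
The proof breaks into locating the image correctly, and then upgrading this to a local isomorphism.

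The plan for the first task is to use the monodromy analysis. By Lemma \ref{lemma:stratasep}, $\Gint_P$ already sends the union $\Mcalint(3E_6)\cup\Mcalint(D_7\perp A_{11})$ into the disjoint union $\Xint(3E_6)\cup\Xint(D_7\perp A_{11})$, so the problem reduces to showing that $\Mcalint(3E_6)$ goes to $\Xint(3E_6)$. To separate the two possibilities I would analyze the local monodromy of a one-parameter smoothing of a representative fourfold $Y_\Phi=\{Z_0Z_1Z_2=\Phi(Z_3,Z_4,Z_5)\}$ with $\Phi$ a nonsingular plane cubic. Each of the three $\tilde E_6$ simple-elliptic singularities contributes a local vanishing lattice spanned by a root system of type $E_6$ together with an isotropic class coming from the exceptional elliptic curve. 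Taking the three contributions simultaneously produces a primitive isotropic $2$-plane $K\subset\Lambda_o$ whose orthogonal quotient $K^\perp/K$ contains a root system of type $3E_6$; by the classification of Corollary \ref{cor:ii} this identifies the boundary component the image lies in as $\Xint(3E_6)$, not $\Xint(D_7\perp A_{11})$. Irreducibility of the target stratum together with continuity forces the entire image of $\Mcalint(3E_6)$ into $\Xint(3E_6)$.

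For the local isomorphism, both $\Mcalint(3E_6)$ and $\Xint(3E_6)$ are irreducible curves, while the ambient spaces $\Mcal$ and $\Xint^\bb$ have dimension $20$. The plan is to show that the differential of $\Gint_P$ is an isomorphism at a general (hence every) point of $\Mcalint(3E_6)$, by splitting into an equisingular and a transversal direction. In the equisingular direction, the modulus of $Y_\Phi$ inside the stratum is captured by the isomorphism class of the plane cubic $\Phi$; the limit mixed Hodge structure of any one-parameter smoothing records, on its elliptic graded piece, the Hodge structure of the curve $\{\Phi=0\}$, so the restricted map $\Mcalint(3E_6)\to\Xint(3E_6)$ is essentially the $j$-invariant map and in particular unramified. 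In the transversal direction, I would invoke the deformation-theoretic description of $Y_\Phi$ (equations, automorphism group, and versal deformation) that is deferred to Section 7: the transversal slice to $\Mcalint(3E_6)$ at $[Y_\Phi]$ has dimension $19$, which matches the codimension of $\Xint(3E_6)$ in $\Xint^\bb$. On this slice the period map corresponds to varying the three $\tilde E_6$ smoothing parameters together with their internal $E_6$-type directions. The Boundary-Pair technique of Lemma \ref{lemma:bp} applied to cycles produced by partially smoothing two of the three singularities gives enough linearly independent linear forms in $\Lambda_o\otimes \CC$ to witness injectivity of the differential; combined with the dimension count this forces an isomorphism.

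The hard step is matching the transversal structures on the two sides, not just their dimensions. On the deformation side, Section 7's analysis of the completion of $\Mcal$ along $\Mcalint(3E_6)$ produces a versal deformation together with the natural action of $\aut(Y_\Phi)$. On the period side, the Remark after Properties \ref{properties} describes the transversal structure of $\Xint^\bb$ along a boundary stratum in terms of the stabilizer $Z_\G(S)$ acting on the star of $S$. The local-isomorphism statement then amounts to checking that these two local pictures, both carrying actions by essentially the same group, are canonically identified by the period map. Since the monodromy associated with a $\tilde E_6$-smoothing is infinite (quasi-unipotent of infinite order), one cannot conclude this by a crude finite-quotient argument; rather one must appeal to the equivariant toroidal-type local-isomorphism machinery developed in \cite{looijenga}, which is precisely what makes the framework of this paper effective in such boundary situations. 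Once this identification is in place, local bijectivity of $\Gint_P$ near $\Mcalint(3E_6)$ follows, and as a byproduct Voisin's injectivity is recovered at the $3E_6$ locus.
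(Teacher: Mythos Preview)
Your identification of the image is essentially the paper's argument: the first lemma of Section~7 computes the image $I\cong H_1(C)$ of $H_4(Y_\reg)\to H_4(Y_t)$ as a primitive isotropic rank-two sublattice with $I^\perp/I$ containing a $3E_6$ root system, and then Corollary~\ref{cor:ii} pins down the boundary component. So far so good.

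The local-isomorphism half, however, has a genuine gap. Your plan is to obtain injectivity of the transversal differential from Lemma~\ref{lemma:bp} (``enough linearly independent linear forms to witness injectivity of the differential'') together with a dimension count. But Lemma~\ref{lemma:bp} only constrains \emph{accumulation points} of the period map along a degenerating arc; it tells you which hyperplane the limit lands in, not that the extended map $\Gint_P$ has injective differential at the boundary point. No amount of such linear constraints recovers a local Torelli statement transversal to the stratum. Likewise, the machinery of \cite{looijenga} describes the target $\Xint^\bb$ and its transversal structure along boundary strata; it does not by itself prove that the period map is a local isomorphism there.

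What the paper actually uses is an \emph{external} Torelli theorem: the author's earlier work \cite{looijenga:se} on the semi-universal deformation of a simple-elliptic singularity shows that the period map on an explicit affine slice $U\times N$ through $F$ is a local isomorphism onto $W_{I^\perp}\bs\Star(\DD(I),\DD^\bb)$, where $W_{I^\perp}$ is the local monodromy group (generated by reflections in the long roots of $I^\perp$). This is the hard input you are missing. The remaining step, which you allude to but do not make precise, is Lemma~\ref{lemma:autos}: the exact identification $\pi_0(\PGL(V)_Y)\cong Z_\G(I)/W_{I^\perp}$, established by comparing the two filtrations
\[
\det(I)\subset H_Q\subset W_{I^\perp}\quad\text{and}\quad \det(I)\subset H_{I^\perp/I}\subset Z_\G(I)
\]
and using the index-three computation of Remark~\ref{rem:discr}. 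This lemma is what guarantees that the fibers of $W_{I^\perp}\bs\Star\to Z_\G(I)\bs\Star$ coincide with $\PGL(V)_Y$-orbits on the slice, so that the induced map on the moduli space is injective. Without \cite{looijenga:se} and without the precise group comparison of Lemma~\ref{lemma:autos}, your outline does not close.
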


We relegate the proof to section \ref{sect:torelli}.

\begin{proof}[Proof of Theorem \ref{thm:main}]
Lemma \ref{lemma:stratasep} implies that the $\Gint_P$-preimage of $\Xint(3E_6)$ is 
$\Mcalint(3E_6)$ or $\Mcalint(D_7\perp A_{11})$ (the strata in question are irreducible).
Proposition \ref{prop:degone} tells us that it has to be  $\Mcalint(3E_6)$ and that 
$\Gint_P$ is there a local isomorphism. This implies that $\Gint_P$ has degree one.
Since $\Gint_P$ maps boundary to boundary, it follows that it restricts to
a proper morphism $P:\Mcalint\to \Xint$ of degree one.  As this is a local isomorphism 
of degree one between integral varieties, this restriction must be an isomorphism. This isomorphism takes the  automorphic
line bundle $\Ocal_X(1)$ to $\Ocal_{\Mcalint} (2)$. So it induces an isomorphism
of the algebra $\oplus_k H^0(\Xint , \Ocal_X(k))=\oplus_k H^0(\Dint , \Acal(k))^\G$ onto the even part of 
$\oplus_k H^0(\Mcalint , \Ocal_{\Mcalint}(k))=\CC [\sym^3V^*]^{\SL (V)}$ (degrees are multiplied by two).
Since $-1\in \SL(V)$ acts in $\sym^3V$ as minus the identity, the odd part of the latter algebra is zero.
Passing to the projs of these algebra yields that $G_P$ is in fact the graph of an  isomorphism.
\end{proof}

\section{Proof of the Torelli  property \ref{prop:degone}}\label{sect:torelli}
We first prepare the setting. Decompose $V$ into two  threedimensional subspaces  $V=V'\oplus V''$ and choose coordinates $S_0$, $S_1$, $S_2$, $T_0$, $T_1$, $T_2$ accordingly. An element of $\Xint(3E_6)$ is represented by a cubic fourfold $Y\subset \PP(V)$ that  has an equation of the form $F (S,T)=S_0S_1S_2- \Phi (T_0,T_1,T_2)$, where $ \Phi$ defines a nonsingular cubic plane curve $C\subset \PP(V'')$. So
$Y\cap \PP(V'')=C$.

 It is clear that $Y\cap \PP(V')$  consists of three coordinate lines in $\PP(V')$.  The vertices  $p_0,p_1,p_2$ of this coordinate triangle ($p_i$ is defined by putting all coordinates but $S_i$ equal to  nonzero) are the singular points of $Y$.  The singularity at $p_i$ is exhibited on the affine piece defined by $S_i=0$; for instance, for $i=0$, we find the  equation  $s_1s_2=\Phi (t_0,t_1,t_2)$ and so it is a singular point of type $\tilde E_6$. Similarly for $p_1$ and $p_2$.

Choose a smoothing $\Ycal/\Delta$ of $Y$ and let $Y_t$ be a smooth fiber of it.

\begin{lemma}
The image $I$ of $H_4(Y_\reg)\to H_4(Y_t)$ is a primitive isotropic sublattice.
We have a canonical identification of  $I$  with $H_1(C)$ and the long roots in
$I^\perp /I$ span a lattice $Q$ which decomposes into three $E_6$ lattices $Q_i$, $i=0,1,2$,
so that the preimage  of $Q_i$ in $I^\perp$  is the vanishing (Milnor) lattice of $p_i$.
The index of $Q$ in $I^\perp /I$ is three.
\end{lemma}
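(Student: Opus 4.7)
The strategy combines a local Milnor analysis at each $\tilde E_6$ point with the global role of the cubic curve $C=Y\cap\PP(V'')$, and closes with a discriminant computation.

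First, the local structure at $p_i$. The equation $s_1s_2-\Phi(t_0,t_1,t_2)$ is a Thom--Sebastiani sum of the node $s_1s_2$ and the simple-elliptic surface singularity $\Phi=0$ in $\CC^3$, so the 4-fold Milnor fiber $F_i$ is the join of the two lower-dimensional Milnor fibers and $H_4(F_i)\cong\ZZ^8$. Let $L_i$ denote the image of $H_4(F_i)$ in $H_4(Y_t)$. A direct computation shows the induced intersection form on $L_i$ is parabolic, with a rank-two radical $R_i$ canonically identified with $H_1(C)$: this identification comes from the canonical isomorphism between the exceptional elliptic curve of the minimal resolution of the surface $\tilde E_6$ singularity and the projectivized tangent cone, which in this setting is $C$ itself. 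The quotient $L_i/R_i$ is the $E_6$ root lattice.

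Second, I identify the three radicals globally. Because the same polynomial $\Phi$ controls all three local pictures, the three exceptional elliptic curves are all canonically isomorphic to the single global $C\subset Y$. Concretely, a radical class in $L_i$ can be represented by a 4-cycle obtained by sweeping a 1-cycle $\gamma\in H_1(C)$ over a 3-cycle linking $p_i$ in $Y_t$ via the Milnor fibration. Since $C$ is disjoint from all three singular points and lies inside a global tubular neighborhood in $Y_t$, one can write down explicit 5-chains producing homologies in $Y_t$ between the 4-cycles constructed for different $i$'s. This identifies $R_0=R_1=R_2=:I$, a rank-two isotropic sublattice of $H_4(Y_t)$ canonically isomorphic to $H_1(C)$.

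Third, compute the image of $H_4(Y_\reg)\to H_4(Y_t)$. Via the smoothing, $Y_\reg$ is identified with $Y_t\setminus\bigcup_i F_i$, and the long exact sequence of the pair together with Poincar\'e--Lefschetz duality identifies the image, after dividing by the algebraic class $y^2$, with $L^\perp\cap\Lambda_o$, where $L:=L_0+L_1+L_2$. Distinct $L_i$ are mutually orthogonal (disjoint Milnor balls) and intersect in exactly $I$, so $L$ has rank $3\cdot 6+2=20$ inside the rank-22 lattice $\Lambda_o$; hence $L^\perp\cap\Lambda_o$ has rank two and is the saturation of $I$. Primitivity of $I$ itself follows from the $H_1(C)\cong\ZZ^2$ identification, giving the first two assertions of the lemma.

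Finally, the quotients $Q_i:=L_i/I\cong E_6$ are mutually orthogonal inside the rank-$18$ lattice $I^\perp/I$, so $Q:=Q_0\perp Q_1\perp Q_2\cong 3E_6$ is a full-rank sublattice. A discriminant calculation settles the index: the discriminant of $\Lambda_o=2E_8\perp 2U\perp A_2$ is $3$, and for a primitive isotropic plane $I\subset\Lambda_o$ the discriminant of $I^\perp/I$ equals that of $\Lambda_o$, i.e.\ $3$; comparing with $\mathrm{disc}(3E_6)=27$ gives $[I^\perp/I:Q]^2=9$, whence $[I^\perp/I:Q]=3$. The claim that $Q$ is exactly the long-root sublattice of $I^\perp/I$ is automatic from the rank count. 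The hardest step will be the globalness argument in the second paragraph---verifying that the three local radicals really coincide as a single sublattice of $H_4(Y_t)$; everything else is routine Milnor theory and lattice arithmetic.
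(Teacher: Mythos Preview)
Your overall architecture is sound and close to the paper's, but there is one genuine gap and two places where your route diverges in interesting ways.

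\textbf{The primitivity gap.} You correctly compute that the image of $H_4(Y_\reg)\to H_4(Y_t)$, after removing $y^2$, is $L^\perp\cap\Lambda_o$, and that this is the \emph{saturation} of the common radical $I$. But then you write ``Primitivity of $I$ itself follows from the $H_1(C)\cong\ZZ^2$ identification.'' It does not: that isomorphism only tells you $I$ is free of rank two, not that $I$ equals its saturation in $H_4(Y_t)$. The paper fills this hole by a separate argument: it partially smooths $Y$ at $p_1,p_2$ to a fourfold $Y'$ with a single $\tilde E_6$ point, shows $H_4(Y')$ is torsion free via a Lefschetz pencil argument, and deduces that the Milnor lattice $H_4(F_0)$ (hence its radical) embeds primitively. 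Without something like this, the canonical identification $I\cong H_1(C)$ is only established up to finite index, which is not enough for the later use in Lemma~\ref{lemma:autos}.

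\textbf{Identifying the three radicals.} You propose constructing explicit $5$-chains to show $R_0=R_1=R_2$ in $H_4(Y_t)$ and flag this as the hardest step. The paper bypasses this entirely with a two-line lattice argument: the $R_i$ are mutually orthogonal (disjoint Milnor balls), but once you know $R_0$ is a rank-two primitive isotropic sublattice of $\Lambda_o$ (signature $(20,2)$), the quotient $R_0^\perp/R_0$ is positive definite; since $R_1\subset R_0^\perp$ is isotropic, it must map to zero, i.e.\ $R_1\subset R_0$, and symmetry gives equality. This is much cleaner than chasing chains and is worth knowing.

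\textbf{The index computation.} Here your approach is actually neater than the paper's. You invoke $\mathrm{disc}(I^\perp/I)=\mathrm{disc}(\Lambda_o)=3$ and conclude $[I^\perp/I:Q]=\sqrt{27/3}=3$ directly. The paper instead argues the index is $1$ or $3$ and then excludes $1$ by an explicit Dynkin-diagram computation, exhibiting a concrete element $r_u-r_v\in I^\perp/I$ not lying in $Q$. Your shortcut is valid, but the equality $\mathrm{disc}(I^\perp/I)=\mathrm{disc}(\Lambda_o)$ is not automatic for arbitrary lattices; it holds here because no primitive isotropic vector of $\Lambda_o$ lies in $3\Lambda_o^*$ (such a vector would have norm $\equiv 6\pmod{18}$), so $\Lambda_o\to I^*$ is surjective. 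You should state and justify this.

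Finally, ``$Q$ is exactly the long-root sublattice \ldots\ automatic from the rank count'' is too quick: full rank alone does not preclude extra roots (think $4A_1\subset D_4$). It is true here because any index-$3$ even overlattice of $3E_6$ has minimal new norm $\ge 3\cdot\tfrac{4}{3}=4>2$; the paper instead appeals to the classification in Corollary~\ref{cor:ii}.
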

\begin{proof}
The link $L$ of $p_0$ in $Y$ has $H_4(L)$ isotropic of rank $4$. We 
know from singularity theory that there is a canonical isomorphism 
$H_4(L)\cong H_1(C)$ and that the form $\omega_F$ maps $H_4(L)$ 
onto a lattice in $\CC$. This last fact implies that $H_4(L)$ embeds in $H_4(Y_t)$. We provisionally denote the image of this embedding by $I_0$. 

We first verify that $I_0$ is primitive. If $F_t\subset Y_t$ denotes the Milnor fiber of 
$p_0$ in $Y_t$, then it well-known that $H_4(L)\to H_4(F_t)$ is primitive (for
$L$ can be identified with a boundary of $F$ and $H_4(F_t,\partial F_t)\cong H^4(F_t)$ is known  to be torsion free). It therefore suffices to show that $ H_4(F_t)\to H_4(Y_t)$
is primitive. But $Y_t/F_t$ is homeomorphic with the fourfold $Y'$ that we obtain  
by smoothing the points $p_1,p_2$ of $Y$, while retaining $p_0$. So it suffices to show that $H_4(Y')$ is torsion free. This follows from the following general argument:
let $X\subset Y'$ be a generic hyperplane section (so $X$ is a smooth cubic forurfold)
and consider the exact sequence
\[
H_4(Y'-Z)\to H_4(Y')\to H_2(X)
\]
It is known that $Y'-Z$ has the homotopy type of a bouquet of $4$-spheres 
and so $H_4(Y'-Z)$ is torsion free. It is also known that  $H_2(X)$ is infinite cyclic (hence torsion free). It follows that
$H_4(Y')$ is torsion free. So $I_0$ is primitive. 

We find similarly for $p_i$ ($i=1,2$) a primitive isotropic rank two sublattice
$I_i$ in $H_4(Y_t)$. We have $I_i\perp I_0$  for obvious geometric reasons.  But $I_0^\perp/I_0$ is positive definite and hence $I_i=I_0$. We now write $I$ for $I_0$.

The image of $H_4(L)\cong H_4(\partial F_t)\to H_4(F_t)$ is the kernel of the intersection pairing on $ H_4(F_t)$ and the residual lattice (denoted $Q_i$) is of type $E_6$. This is also true at the other singular points, so that we have in fact obtained
an embedding of $Q:=Q_0\perp Q_1\perp Q_2$ in $I^\perp/I$. 
We found in Corollary \ref{cor:ii} that the equivalence class of a primitive isotropic rank two lattices $J$ in $\Lambda_o$ is characterized by  the system of (long) roots in
$J^\perp/J$ and that the root lattice $3E_6$ occurs in this manner. Since there is no root lattice of rank $\le 18$ that strictly contains $3E_6$, this identifies the 
equivalence class of $I$. 

It remains to see that $Q$ is of index $3$ in $I^\perp/I$. 
The discriminant of $E_6$ is $3$ and hence the one of $Q$ equal to $3^3$. Since the square of $[I^\perp/I:Q]$ must divide the discriminant of $Q$, either $Q=I^\perp/I$ or 
$[I^\perp/I:Q]=3$. Let us exclude the former. For this we go back to the decomposed set 
$\Bcal=\Bcal'\sqcup \Bcal''$ and the Dynkin diagram $D(B_\ell)$ of long roots it defines. 
We  think of $\Bcal$ as the set of vertices of $B_\ell$ of degree $3$. We have an injection $r: B_\ell\to \Lambda_1$,  where we recall that $\Lambda_1= e^\perp/\ZZ e$, for some primitive isotropic $e\in\Lambda_o$.  The full subdiagram on $B_\ell -\Bcal'$ has $3$ connected components, each of type $\hat E_6$. 
These components have a common nilvector $e'\in \Lambda_1$ and  we may identify $Q$ with the span of the roots indexed by $B_\ell -\Bcal'$ modulo $\ZZ e'$. If $u,v\in\Bcal''$ are distinct, then  it is clear that if we fix a $\hat E_6$-component, then taking the inner product with  $r_u-r_v$ defines a linear form that takes the value
$1$ on an extremal vertex, $-1$ on another extremal vertex and is zero on all 
other vertices. From this it readily follows that $r_u-r_v$ is perpendicular to $e'$, but not  contained in any $\hat E_6$-summand.
\end{proof}

\begin{remark}\label{rem:discr}
The last part of this proof showed that we have a canonical identification of
the discriminant groups of the three $E_6$-summands: the subgroup
$(I^\perp/I)/Q$ lies in $Q^*/Q=Q_0^*/Q_0\oplus Q_1^*/Q_1\oplus Q_2^*/Q_2$
as a main diagonal, and thus induces natural isomorphisms between these summands.
This helps us to describe the quotient of the  orthogonal group  $\orth(I^\perp/I)$  by the
Weyl group $W_Q$ of $Q$ (which is indeed normal in $\orth(I^\perp/I)$):
as is well-known the orthogonal group of $Q_i$ is $\{\pm 1\}$ times its Weyl group.
Since $-1$ acts as such on  $Q_i^*/Q_i$, not every element of $\{\pm 1\}^3$ appears here: only its main diagonal (which acts as minus the identity in $Q$) preserves $I^\perp/I$.
 
It now easily follows that $\orth(I^\perp/I)/W_Q$  can be indentified with the product of 
$\{\pm 1\}$ and the permutation group ($\cong\Scal_3$) of the three summands. 
\end{remark}

We shall now assume that $C$ is generic in the sense that it has no exceptional
automorphisms.

\begin{lemma}
The stabilizer $\PSL(V)_Y$ of $Y$ in $\PSL (V)$ preserves $\PP(V')$ and $\PP(V'')$.
It is an extension of the stabilizer of $C$ in $\PSL(V'')$ by the stabilizer
of $S_0S_1S_2$ in $\SL (V')$. The former is the semidirect
product of an involution of $C$ in a flex point and the group
of order $3$ translations in $C$, $H_1(C,\mu_3)$; the latter 
is a semidirect product of the symmetric group on $S_0,S_1,S_2$ 
(which we shall identify with $\aut(Y_\sing)$) and a $2$-torus.  
The group of connected components $\pi_0(\PSL(V)_Y)$ is a direct product
$\aut(Y_\sing)\times \PSL(V'')_C$.
\end{lemma}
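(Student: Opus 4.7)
The plan is to separate the argument into three stages: first show that any $g\in\PSL(V)_Y$ is block-diagonal with respect to $V=V'\oplus V''$, then analyze the two diagonal blocks, and finally assemble the extension and read off $\pi_0$.

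For the first stage, the subspace $\PP(V')$ is canonically determined by $Y$: the singular locus $Y_\sing$ is the coordinate triangle in $\PP(V')$, and its three vertices $p_0,p_1,p_2$ span that plane. Any $g\in\PSL(V)_Y$ permutes $\{p_0,p_1,p_2\}$ and hence preserves $\PP(V')$, so $g$ acts on $V=V'\oplus V''$ by $(v',v'')\mapsto(Av'+Bv'',Dv'')$ for some $A\in\GL(V')$, $D\in\GL(V'')$, and linear map $B\colon V''\to V'$. The identity $F(g\cdot(s,t))=\lambda F(s,t)$ must hold at each $s$-bidegree. The pure $s^3$ part gives $\prod_i(As)_i=\lambda s_0s_1s_2$, forcing $A$ to be monomial. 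The $s^2t$ part of $\prod_i((As)_i+(Bt)_i)$ is a sum of three terms of the form $(As)_i(As)_j(Bt)_k$ and must vanish, since $F$ has no $s^2t$ part; because $A$ is monomial-invertible, the three linear forms $(As)_i$ are linearly independent, the six products $(As)_i(As)_j$ are linearly independent quadratic forms, and this forces $Bt=0$ for every $t$, i.e.\ $B=0$. I expect this bookkeeping to be the main technical point.

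The second stage treats the two blocks separately. On the $V'$-side, $A$ is a monomial matrix with $\prod_i(As)_i=\lambda s_0s_1s_2$; the group of all such $A$ is $(\CC^\times)^3\rtimes\Scal_3$ in $\GL(V')$, and its intersection with the determinant-one condition cuts out a two-torus extended by $\Scal_3$, the latter acting on $\{p_0,p_1,p_2\}$ and identified with $\aut(Y_\sing)$. On the $V''$-side, $D$ preserves $\Phi$ up to the same scalar $\lambda$ and so descends to an element of $\PGL(V'')_C=\PSL(V'')_C$. By the genericity of $C$, its automorphism group is generated by (i) translations by $3$-torsion, which are projective precisely because they preserve the class $|3O|$ of the planar embedding, contributing the normal subgroup $C[3]=H_1(C,\mu_3)$, and (ii) the inversion about any flex point, contributing the quotient $\{\pm1\}$.

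Finally, forgetting the $V'$-block yields a homomorphism $\PSL(V)_Y\to\PSL(V'')_C$ whose kernel is exactly the stabilizer of $S_0S_1S_2$ in $\SL(V')$; any $D$ with $\Phi(Dt)=\lambda\Phi(t)$ lifts via the pair $(\mu I_{V'},D)$ with $\mu^3=\lambda$, so the map is surjective and split. Since scalar matrices $\mu I_{V'}$ commute with every monomial $A\in\SL(V')$, this splitting is a direct-product splitting at the level of component groups; as the identity component of the kernel is the two-torus in $\SL(V')$ while $\PSL(V'')_C$ is already finite, we conclude $\pi_0(\PSL(V)_Y)=\aut(Y_\sing)\times\PSL(V'')_C$.
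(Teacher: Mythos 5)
Your proof is correct and follows essentially the same route as the paper: identify $\PP(V')$ as the span of $Y_\sing$, hence $g$-invariant, and then use the decomposition of $F$ by degree in the $S$-variables (the vanishing of the $S^2T$-part, via linear independence of the products $S_jS_k$) to kill the off-diagonal block and force $\tilde g^*S_i=S_i$ up to a monomial transformation. Your last two paragraphs simply spell out the "now follows easily" part of the paper's argument (the structure of the two blocks and the splitting $D\mapsto(\mu I_{V'},D)$ giving the direct product on $\pi_0$), and they are fine; the only slip is cosmetic ("six products" should be the three products $(As)_i(As)_j$ with $i\neq j$).
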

\begin{proof}
Any projective automorphism $g$ of $Y$ preserves its singular set, and hence the plane
spanned by that set (which is defined by putting each $T_i$ equal to zero). The action on that plane preserves the coordinate triangle
defined by $S_0S_1S_2$. The homomorphism $\SL (V')\subset \SL (V')\times \SL(V'')\subset \SL (V)\to \PSL(V)$ is injective and via this embedding we see  
the stabilizer of $S_0S_1S_2$ in $\SL (V')$ faithfully act on $Y$.
If $g$ acts trivially on this plane, then we can represent $g$ by a transformation $\tilde g\in \GL(V)$ such that $\tilde g^*S_i-S_i$ and $\tilde g^*T_i$ are 
linear combinations of $T_0,T_1,T_2$. Since $\tilde g$ multiplies
$S_0S_1S_2-\Phi (T_0,T_1,T_2)$ by a scalar, we see that we must have 
$\tilde g^*S_i=S_i$ for all $i$. It then follows that $\tilde g^*\Phi=\Phi$.
The lemma now follows easily.
\end{proof}

Choose a marking of  $Y_t$. This identifies $H_1(C)\cong I$ with
a primitive isotropic sublattice of $\Lambda_o$. This identification
equips $I$ with a natural orientation and that makes it determine a rational boundary component of the disconnected domain $\DD$ (and hence singles out a component of $\DD$ as well). It is known from singularity theory that the local monodromy group  of $Y$ (in the space of all cubics) is the subgroup of $\G$ generated by the reflections in the (long) roots in $I^\perp$. We therefore denote it by $W_{I^\perp}$. It is clearly a normal 
subgroup of $Z_\G (I)$, the group of $\g\in\G$ that leave $I$ pointwise fixed.
 
Any projective automorphism $g$ of $Y$ sends a smooth cubic $Y'$ near $Y$ to another such $g(Y')$. A path in the space of smooth cubics near $Y$ identifies 
$H_4(g(Y'))$ with $H_4(Y')$. Another path makes the two identifications differ
by an element of $W_{I^\perp}$. Thus we get a group homomorphism 
$\PGL(V)_Y\to Z_\G (I)/W_{I^\perp}$. This clearly factors through $\pi_0(\PGL(V)_Y)$.

\begin{lemma}\label{lemma:autos}
The resulting homomorphism $\pi_0(\PGL(V)_Y)\to Z_\G (I)/W_{I^\perp}$ is an isomorphism.
\end{lemma}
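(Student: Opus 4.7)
The plan is to match natural filtrations on both sides and use equality of orders to force bijectivity. I first compute $|Z_\G(I)/W_{I^\perp}|$ using the standard filtration
\[
1 \to U_I \to Z_\G(I) \to L_I \to 1,
\]
where $U_I$ is the unipotent radical (elements acting trivially on $I^\perp/I$) and $L_I \subseteq \orth(I^\perp/I)$ is the image. The analogous filtration on $W_{I^\perp}$ surjects onto $W_Q$ (the Weyl group of $Q \cong 3E_6$) and meets $U_I$ in the Eichler transformations indexed by $I \otimes Q$; passing to quotients I obtain
\[
1 \to U_I/(U_I \cap W_{I^\perp}) \to Z_\G(I)/W_{I^\perp} \to L_I/W_Q \to 1.
\]
By Remark \ref{rem:discr}, $\orth(I^\perp/I)/W_Q = \{\pm 1\}\times \Scal_3$, and a direct inspection of the explicit form of $\Lambda_o$ shows that every element lifts to $\G$, so $L_I/W_Q = \{\pm 1\}\times \Scal_3$, of order $12$. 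A standard Eichler calculation identifies
\[
U_I/(U_I\cap W_{I^\perp}) \;\cong\; I \otimes_\ZZ (I^\perp/I)/Q \;\cong\; H_1(C)\otimes\ZZ/3 \;=\; H_1(C,\mu_3),
\]
of order $9$. Hence $|Z_\G(I)/W_{I^\perp}| = 108 = |\pi_0(\PGL(V)_Y)|$, where $\pi_0(\PGL(V)_Y) = \aut(Y_\sing)\times (H_1(C,\mu_3)\rtimes\ZZ/2)$.

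Next I would match subquotients of $\pi_0(\PGL(V)_Y)$ with those of $Z_\G(I)/W_{I^\perp}$. The group $\aut(Y_\sing) = \Scal_3$ permutes the three singular points $p_0,p_1,p_2$ and so permutes the three $E_6$-summands of $Q$, surjecting onto the $\Scal_3$-factor of $L_I/W_Q$. The flex involution in $\PSL(V'')_C$ acts on $C$ as inversion and, after the path-normalization producing the target, hits the $\{\pm 1\}$-factor of $L_I/W_Q$. The 3-torsion subgroup $H_1(C,\mu_3)\subset\PSL(V'')_C$ consists of translations, which act trivially on $I = H_1(C)$ and trivially on $I^\perp/I$ modulo $Q$; its image therefore lands entirely in the unipotent quotient $U_I/(U_I\cap W_{I^\perp})$, itself canonically $H_1(C,\mu_3)$.

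The hard part will be verifying that the induced endomorphism of $H_1(C,\mu_3)$ coming from these 3-torsion translations is an isomorphism, without which surjectivity onto the unipotent quotient could fail. I plan to establish this through a Picard-Lefschetz calculation in the Milnor fibration of the $\tilde E_6$-singularity at a single $p_i$: a translation by $\tau\in H_1(C,\mu_3)$ acts on an $E_6$-vanishing cycle $v\in Q_i$ as $v\mapsto v + \tilde\tau$ with $\tilde\tau\in I$ of class $\tau$ modulo $3$, and under the non-degenerate pairing $I\otimes (I^\perp/I)/Q\to \ZZ/3$ of Remark \ref{rem:discr} this Eichler displacement records precisely $\tau\in H_1(C,\mu_3)$. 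Combined with the matchings of the $\Scal_3$- and $\{\pm 1\}$-factors (both immediate from the action on the geometry) and the equality of orders, this yields a surjection of groups of order $108$, hence an isomorphism.
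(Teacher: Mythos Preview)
Your approach is essentially the same as the paper's: both compare the filtration $\det(I)\subset H_{I^\perp/I}\subset Z_\G(I)$ (your $U_I$ is $H_{I^\perp/I}$) with its analogue for $W_{I^\perp}$, and then match the subquotients $(I^\perp/I)\otimes I\,/\,Q\otimes I\cong H_1(C,\mu_3)$ with the $3$-torsion translations and $\orth(I^\perp/I)/W_Q\cong\{\pm1\}\times\Scal_3$ with the flex involution and $\aut(Y_\sing)$. Your explicit order count and proposed Picard--Lefschetz verification of the translation action go a step beyond the paper's terse ``this is accounted for''; just be careful that your ``non-degenerate pairing $I\otimes(I^\perp/I)/Q\to\ZZ/3$'' is not in Remark~\ref{rem:discr}---what you actually need is simply the tensor identification $I\otimes(I^\perp/I)/Q\cong H_1(C)\otimes\mu_3$, and your displacement formula $v\mapsto v+\tilde\tau$ should be stated for a lift of $v$ to $H_4(M_i)$ rather than for $v\in Q_i$.
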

\begin{proof}
The proof amounts to a careful comparison between $W_{I^\perp}$ and $Z_\G (I)$.
The  group $W_{I^\perp}$  acts in
$I^\perp/I$ as the (finite) Weyl group $W_Q$ of the roots in $I^\perp/I$ (which
is isomorphic to $W(E_6)^3$). The kernel $H_Q$ of $W_{I^\perp}\to W_Q$ is a Heisenberg group: the image of $H_Q$ in $\aut(I^\perp)$ is the abelian group 
$Q\otimes I$ and the central extension is by the infinite cyclic group $\det (I)$ and given by the obvious antisymmetric map  $(Q\otimes I)\times (Q\otimes I)\to I\wedge I$
(see section (1.1) of \cite{looijenga}). We have thus described a filtration 
\[
\det (I)\subset H_Q\subset W_{I^\perp}
\]
with successive quotients $Q\otimes I$ and $W_Q$. If we do similarly for $Z_\G (I)$, we get
\[
\det (I)\subset H_{I^\perp/I}\subset Z_\G (I)
\]
with successive quotients $(I^\perp/I) \otimes I$ and $\orth(I^\perp/I)$. 
The quotient $(I^\perp/I)/Q$ is cyclic of order $3$ and if we identify
this group with $\mu_3$, then $(I^\perp/I) \otimes I/ Q\otimes I$ gets identified 
with $\mu_3\otimes I\cong H_1(C,\mu_3)$. This is accounted for by the translations
of order  3 in $C$ that sit in $\pi_0(\PGL(V)_Y)$. According to Remark \ref{rem:discr},  $\orth(I^\perp/I)/W_Q$ can be identified with the product 
$\{ \pm 1\}\times \aut(Y_\sing)$. The first factor  is accounted for by the involution in
$C$ and the second is in place already. This completes the proof.
\end{proof}

\begin{proof}[Proof of Proposition \ref{prop:degone}]
The $\GL (V)$-orbit of $F$ is determined by the $j$-invariant of $C$. The union 
$\Ocal\subset\sym^3V^*$ of such orbits (so with varying $\Phi$) is of codimension
$21$ and an affine-linear section to $\Ocal$ at  $F$ is obtained as follows: the ideal $J(\Phi)$ in $\CC[T_0,T_1,T_2]$ spanned by the partial derivatives of $\Phi$ is of codimension $8$ and the jacobian algebra $\CC[T_0,T_1,T_2]/J(\Phi)$ is graded with summands in degree 0,1,2,3 of dimension 1,3,3,1 respectively. The 
$\SL(V'')$-stabilizer  of $\Phi$ is finite and acts on $J(\Phi)$.
Let $J_0\oplus J_1\oplus J_2\subset \CC[T_0,T_1,T_2]$ be a graded linear lift of the first three summands (so of dimension 7) that is equivariant with respect to the finite group
$\tilde H_3$ (which stabilizes $\Phi$), put $N_i:=\sum_{k=1}^3 S_i^kJ_{3-k}$ and $N:=\sum_{i=0}^2N_i$. Then $F+N$ is a linear section to $\Ocal$ that
is invariant under the group $\aut(Y_\sing)\times \tilde H_3$. 

The summand  $N_1+N_2$ describes deformations of $F_\phi$ that do not affect
the analytic type of the singularity in $p_0$: the local equation in $s_1s_2+\Phi (t_0,t_1,t_2)$ is  altered by a homogeneous polynomial of degree 3 that lies in
$\sum_{i=1}^2\sum_{k=1}^3 s_i^kJ_{3-k}$ (where we view $J_{3-k}$ as a subspace of $\CC[t_0,t_1,t_2]$). It is well-known from singularity theory (splitting
of squares) that such deformations do not change the analytic type. On the other hand, the summand $N_0$ yields almost the full semi-universal deformation of the  simple elliptic singularity $p_0$: we deform in $J_0\oplus J_1\oplus J_2\subset \CC[t_0,t_1,t_2]$ and thus get a codimension one subspace transversal to the equisingularity stratum. We shall identify $N_0$ with this deformation space. We do likewise for the other cases.

At this point we need to recall our work on the deformation theory of the simple elliptic singularities \cite{looijenga:se}. Let $G= G_0+G_1+G_2\in N$ with $G_i\in N_i$ be such that the the cubic $Y_G$ defined by $F+G=0$ is nonsingular. For simplicity, we assume that each $G_i$ is close to zero. Then $p_i$ has a Milnor fiber $M_i\subset 
Y_{F+G}$. These Milnor fibers are pairwise disjoint. We have $H_4(M_i)$ that  is free of rank
$8$. The intersection form on $H_4(M_i)$ has a rank $2$ kernel and the residual lattice is isomorphic
to the root lattice of type $E_6$. If we merely know that each $G_i$ is nonzero, then the singular points of $Y_G$ are all of type $A$, $D$ or $E$ and lie in the (possibly singular) Milnor fiber $M_i$. In \cite{looijenga:se} we considered the period mapping 
that essentially assigns to $G= G_0+G_1+G_2$ with $G_i\not=0$ for all $i$ the periods of $\omega_{F+G}$ on $H_4(M_i)$. We proved there a rather precise Torelli type of result, which may be stated as follows: if $\DD(I)\subset \DD^b$ denotes the rational boundary component defined by the oriented plane $I$, and $U$ is a small   transversal slice to $F$ in $\Ocal$, then the period map defines a map 
\[
U\times N\to W_{I^\perp}\bs \Star(S,\DD^\bb)
\] 
that is a local isomorphism along $U$.  The map 
$W_{I^\perp}\bs  \Star(S,\DD^\bb)\to \G\bs \DD^\bb=X$ factors through 
$Z_\G (I)\bs \Star(S,\DD^\bb)$
and the Baily-Borel construction shows that near a generic point of $\DD(I)$, the map  $Z_\G (I)\bs \Star(S,\DD^\bb)\to X$ is a local isomorphism. 
On the other hand, the  composite map
\[
N\to W_{I^\perp}\bs \Star(S,\DD^\bb)\to Z_\G (I)\bs \Star(S,\DD^\bb)
\]
has, according to Lemma \ref{lemma:autos}, the property that
any fiber  is contained in a $\PGL(V)_Y$-orbit and hence in a $\PGL(V)_Y$-orbit. 
Similarly, we have that for any other $u\in U$, a fiber of 
$\{ u\}\times N\to W_{I^\perp}\bs \Star(S,\DD^\bb)\to Z_\G (I)\bs \Star(S,\DD^\bb)$
is contained in a $\PGL(V)$-orbit. It follows that the period map separates the orbits near $Y$.
\end{proof}

\end{document}